%% file: main.tex
\documentclass[11pt]{article}

\usepackage[T1]{fontenc}
\usepackage[utf8]{inputenc}
\usepackage{lmodern}
\usepackage{amsmath,amssymb,amsthm,mathtools}
\usepackage{booktabs}
\usepackage{enumitem}
\usepackage[a4paper,margin=32mm]{geometry}
\usepackage{microtype}
\usepackage[hidelinks]{hyperref}
\hypersetup{
  pdftitle={Infinite rational distance sets in affine general position: constructions in every dimension},
  pdfauthor={Junkai Qiu},
  pdfsubject={Rational distance sets in affine and general position},
  pdfkeywords={rational distance sets, integral point sets, general position,
    Chebyshev polynomials, rational quadratic forms, cyclic polytopes}
}

\newtheorem{theorem}{Theorem}[section]
\newtheorem{proposition}[theorem]{Proposition}
\newtheorem{lemma}[theorem]{Lemma}
\newtheorem{corollary}[theorem]{Corollary}
\theoremstyle{definition}
\newtheorem{definition}[theorem]{Definition}

\theoremstyle{remark}
\newtheorem{remark}[theorem]{Remark}

\newcommand{\Q}{\mathbb Q}
\newcommand{\R}{\mathbb R}
\newcommand{\Z}{\mathbb Z}
\newcommand{\C}{\mathbb C}

\newcommand{\Nm}{\operatorname{N}}
\newcommand{\diag}{\operatorname{diag}}
\newcommand{\eps}{\varepsilon}
\newcommand{\ip}[2]{\left\langle #1,#2\right\rangle}
\newcommand{\norm}[1]{\left\lVert #1\right\rVert}

\title{Infinite rational distance sets in affine general position:\
constructions in every dimension}
\author{Junkai Qiu\\
\small School of Mathematical Sciences, Dalian University of Technology\\
\small Dalian 116024, China\\
\small \texttt{qjk@mail.dlut.edu.cn}}
\date{}

\begin{document}
\maketitle

\begin{abstract}
For every integer $d\geq1$, we construct a countably infinite set
$X_d\subset\mathbb R^d$ in affine general position, with all pairwise
distances rational.  When $d$ is odd, $X_d$ may also be chosen
so that no $d+2$ points lie on a common sphere.  The construction is
uniform in $d$: positive Chebyshev square decompositions produce harmonic
curves on spheres whose points corresponding to rational parameter values
have pairwise rational distances.  A divided-difference factorization of the affine
determinant shows that sufficiently short arcs are locally convex, and
stereographic projection produces the odd-dimensional examples.  We also construct infinite
rational distance sets in $\mathbb Q^d$ in affine general position for
every even $d$, and in general position for every $d\equiv1\pmod4$.
For every $d\geq1$ and $n\geq d+1$, taking and rescaling suitable finite
subsets gives $n$-point integral point sets in affine general position.
A suitable ordered choice yields integral-distance realizations of all
cyclic polytopes.  In dimension three, we give an explicit rational
parametrization and obtain
infinitely many pairwise non-similar primitive $n_3$-clusters for every
$n\geq4$.
\end{abstract}

\medskip
\noindent\textit{2020 Mathematics Subject Classification.}
Primary 52C10; Secondary 51K05, 11E12, 52B11.

\smallskip
\noindent\textit{Keywords.}
Rational distance sets, integral point sets, general position, Chebyshev
polynomials, rational quadratic forms, cyclic polytopes.

\input{01-introduction}
\input{02-chebyshev-and-convexity}
\input{03-all-dimensional-geometry}
\input{04-rational-euclidean-models}
\input{05-consequences-and-dimension-three}
\input{06-boundaries}

\bibliographystyle{alpha}
\bibliography{references}
\end{document}

%% file: 01-introduction.tex
\section{Introduction}

A subset of Euclidean space is a \emph{rational distance set} if every
distance determined by two of its points is rational.  The requirement is
weak enough to allow familiar infinite examples on lines and circles, but
strong enough to retain arithmetic rigidity.  The contrast with integral
distances is immediate: the theorem of Anning and Erd\H{o}s says that an
  infinite subset of a finite-dimensional Euclidean space with all pairwise
  distances integral is collinear \cite{AnningErdos1945}.  Thus the noncollinear integral
point sets considered below are necessarily finite.  They arise by scaling
finite subsets of our infinite rational distance sets, and the scaling
factor may depend on the chosen subset.  At the other extreme, the
Erd\H{o}s--Ulam
problem asks whether the plane contains a dense rational distance set and
remains open; see \cite{SolymosiDeZeeuw2010,Shaffaf2018,AscherBrauneTurchet2020,CorvajaTurchetZannier2025}.

Rather than metric density, our focus is a hereditary form of geometric
nondegeneracy: every $d+1$ distinct points are affinely independent.  This
condition is substantially stronger than requiring only that the whole set
span $\mathbb R^d$.  Steiger constructed arbitrarily large finite rational
distance sets with this weaker property \cite{Steiger1953}; see also the
infinite constructions of Anil Kumar \cite{AnilKumar2023}.  Avdeev obtained
full-dimensional integral point sets of arbitrary finite cardinality
\cite{Avdeev2020}.

\begin{definition}\label{def:positions}
Let $P\subset\R^d$.
\begin{enumerate}[label=\textup{(\roman*)}]
  \item $P$ is in \emph{affine general position}, also called
  \emph{semi-general position} in the integral-point-set literature
  \cite{KohnertKurz2009}, if
  every $d+1$ distinct points are affinely independent.
  \item $P$ is in \emph{general position} if it is in affine general
  position and no $d+2$ distinct points are cospherical.
\end{enumerate}
\end{definition}

Our first theorem gives the basic construction in every dimension.

\begin{theorem}\label{thm:geometric-main}
For every integer $d\geq1$ there is a countably infinite set
$X_d\subset\R^d$ such that
\[
  \norm{x-y}\in\Q\qquad(x,y\in X_d)
\]
and $X_d$ is in affine general position.  If $d$ is odd, $X_d$ may be
chosen in general position.
\end{theorem}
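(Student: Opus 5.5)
The plan follows the abstract's outline: build a curve on a sphere whose rational-parameter points have pairwise rational distances, show that short arcs of it are locally convex, and then pass to odd dimensions by stereographic projection.

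\textbf{The curve.} Write $d=2m$ or $d=2m-1$. Consider the harmonic (trigonometric moment) curve
\[
  \gamma(\theta)=\bigl(a_1\cos\theta,a_1\sin\theta,\dots,a_m\cos m\theta,a_m\sin m\theta\bigr)\in\R^{2m},
\]
with positive weights $a_k$. For $d=2m-1$ we drop one coordinate or use an odd-dimensional variant. All these points lie on the sphere of radius $(\sum a_k^2)^{1/2}$. The squared distance is
\[
  \norm{\gamma(\theta)-\gamma(\phi)}^2=2\sum_k a_k^2\bigl(1-\cos k(\theta-\phi)\bigr).
\]
Put $s=\sin\frac{\theta-\phi}{2}$. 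Since $1-\cos k\alpha=2\sin^2(k\alpha/2)$, the right-hand side equals $4\sum_k a_k^2 U_{k-1}(\cos\frac{\theta-\phi}{2})^2 s^2$, a Chebyshev-type expression. The key arithmetic step is to choose rational $a_k^2$ so that
\[
  \sum_k a_k^2\,\frac{\sin^2(k\alpha/2)}{\sin^2(\alpha/2)}
\]
is the square of a polynomial in $\cos(\alpha/2)$ with rational coefficients. This is a ``positive Chebyshev square decomposition'': a polynomial $q(c)^2$ that is a nonnegative combination of the $U_{k-1}(c)^2$ with positive coefficients. For example, take $q$ to be a suitable Fejér-type kernel. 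Then $\norm{\gamma(\theta)-\gamma(\phi)}=2|s|\,|q(\cos\frac{\theta-\phi}{2})|$. We restrict to half-angles $t=\theta/2$ with rational $\cos t$ and $\sin t$, i.e.\ $\tan(t/2)\in\Q$. For such parameters the sine and cosine of $(\theta-\phi)/2$ are rational, so all distances are rational. If the $a_k$ themselves are irrational, only distances matter, so this is harmless; a rational model is not needed here.

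\textbf{Affine general position.} We need every $d+1$ distinct parameter points to be affinely independent. The affine determinant $\det[1,\gamma(\theta_0),\dots,\gamma(\theta_d)]$ is a trigonometric polynomial. Factoring out the divided differences, it becomes $\prod_{i<j}(\theta_j-\theta_i)$ times a quantity whose limit, as all $\theta_i\to\theta^*$, is a nonzero multiple of the Wronskian $\det(\gamma',\dots,\gamma^{(d)})(\theta^*)$. The Wronskian is nonzero because the functions $\cos k\theta,\sin k\theta$ are linearly independent and the $a_k\neq0$. By continuity, the determinant has constant nonzero sign for all distinct tuples in a sufficiently short arc $I$. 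Rational points are dense in $I$, so choosing infinitely many of them gives $X_d$. For odd $d$ obtained by dropping a coordinate, the same divided-difference argument applies to the projected curve, provided its Wronskian is nonvanishing somewhere, which I would check directly.

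\textbf{General position for odd $d$.} Take the even-dimensional curve in $\R^{d+1}$, which lies on a sphere $S^d$, and stereographically project from a point of that sphere. The projection is of the form $x\mapsto$ (inversion) restricted to the sphere. Inversion centered at $p$ multiplies distances by $r^2/(\norm{x-p}\norm{y-p})$. Choose the pole $p=\gamma(\theta_p)$ with $\theta_p$ a rational parameter. Then $\norm{x-p}$ is rational, so the image distances stay rational.

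Under this projection, a set of $d+1$ image points lies on a hyperplane, or a set of $d+2$ image points lies on a sphere, exactly when the corresponding preimages together with $p$ satisfy a hyperplane-section condition on $S^d$. Preimages on $S^d$ lie on a common lower-dimensional sphere iff they are affinely dependent in $\R^{d+1}$. So general position of the image reduces to affine general position in $\R^{d+1}$ of the original points together with $p$. This follows from the previous step applied in dimension $d+1$, with $p$ taken in the same short arc.

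The main obstacle is the arithmetic Chebyshev decomposition: producing explicit positive weights for every $m$ whose combination is a perfect square. I would first try $q(c)=U_{m-1}(c)$-like kernels, using Fejér positivity to get nonnegative coefficients, and adjust to make them strictly positive.
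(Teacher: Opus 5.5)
Your architecture is right: a harmonic curve on a sphere, a divided-difference argument on a short arc, and stereographic projection from a pole in the closed arc for odd $d$. But the step you flag as the main obstacle is a genuine gap. With consecutive frequencies $1,\dots,m$ it cannot be filled when $m$ is even. You need rational $a_k^2>0$ for every $k$ with $q(c)^2=\sum_{k=1}^m a_k^2U_{k-1}(c)^2$. If some $a_k=0$, the curve lies in a proper linear subspace of $\R^{2m}$ and affine general position fails. Each $U_j^2$ is an even polynomial, so $q(-c)^2=q(c)^2$ forces $q$ to be even or odd. Comparing leading terms gives $\deg q=m-1$, so for even $m$ the polynomial $q$ is odd and $q(0)=0$. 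On the other hand, $U_j(0)^2=1$ for even $j$, so the right-hand side at $c=0$ equals $\sum_{k\ \mathrm{odd}}a_k^2\geq a_1^2>0$. Your construction therefore fails for every $d\equiv0,3\pmod4$, already for $d=3,4$, where $(\alpha+\beta c)^2=a_1^2+4a_2^2c^2$ forces $\alpha\beta=0$. The Fejér idea does not rescue it: $q=U_{m-1}$ gives a single nonzero weight, and no perturbation inside this frequency window escapes the parity obstruction.

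The paper supplies two ideas you are missing.
\begin{enumerate}
\item The short-arc Wronskian argument works for any set of distinct frequencies. So for an even number $r$ of blocks the paper drops the lowest frequency and uses frequencies $2,\dots,r+1$ in your normalization, i.e.\ weights on $U_1^2,\dots,U_r^2$.
\item The weights come from $q=\sum_j b_jU_{2j}$ via the identity $U_{2i}U_{2j}=U_{i+j}^2-U_{j-i-1}^2$. Appending a small rational $b_m=\eps$ keeps the old weights positive and creates the new weights $2b_{m-1}\eps$ and $\eps^2$. In the even case $b_1$ is retuned so the $U_0^2$ coefficient stays $0$, starting from $(2U_0+U_2)^2=4U_1^2+U_2^2$.
\end{enumerate}
Separately, discard the option of dropping a coordinate for odd $d$. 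Removing one term from the sum of squares destroys the rational chord identity, so odd $d$ must go through stereographic projection. That part of your plan matches the paper's argument: the distance factor is $2R^2/(\norm{x-p}\,\norm{y-p})$ with $R^2\in\Q$, spheres pull back to hyperplane sections, and the pole lies in the closed short arc.
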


Thus in dimension three there is an infinite rational distance
set in which every four points are noncoplanar and no five points are
cospherical.  The cardinality is optimal in every dimension: every
full-dimensional rational distance set is at most countable.  Indeed,
choose affinely independent anchors $a_0,\ldots,a_d$ in such a set.  The
distance-coordinate map
$\Phi(x):=(\norm{x-a_i})_{i=0}^d\in\Q^{d+1}$ is injective.  For
$1\leq i\leq d$, the polarization identity
\[
 2\ip{x}{a_i-a_0}
 =\norm{x-a_0}^2-\norm{x-a_i}^2
  +\norm{a_i}^2-\norm{a_0}^2
\]
shows that $\Phi(x)$ determines all the inner products
$\ip{x}{a_i-a_0}$.  Since the vectors $a_i-a_0$ form a basis of $\R^d$,
these inner products determine $x$.  Thus the set injects into the
countable set $\Q^{d+1}$.

By choosing the rational parameters recursively, the set in
Theorem~\ref{thm:geometric-main} may also be made dense in its underlying
short real-analytic arc, with no two distinct unordered pairs determining
the same distance.  To see this, enumerate a countable basis of the
parameter interval and, at each stage, avoid the zero sets of finitely many
nonzero analytic functions encoding repetitions of previously obtained
distances.  For distinct previously chosen points $a$ and $b$, an identity
\[
 \norm{p(t)-a}^2=\norm{p(t)-b}^2
\]
would place the entire arc in their perpendicular-bisector hyperplane.
Likewise, an identity $\norm{p(t)-a}^2=L^2$ would place the arc on a sphere
centered at $a$.  In the even-dimensional source construction, subtracting
the equation of the source sphere would then place the arc in a hyperplane;
in the odd-dimensional construction, such an identity is excluded by
general position.

The coordinates furnished directly by Theorem~\ref{thm:geometric-main}
need not be rational.  Requiring a realization in $\Q^d$ equipped with the
standard Euclidean metric is a genuinely finer arithmetic problem.  We
solve two uniform families.

\begin{theorem}[Rational-coordinate refinements]\label{thm:arithmetic-main}
The following hold.
\begin{enumerate}[label=\textup{(\roman*)}]
  \item For every even $d\geq2$, there is an infinite set
  $X_d\subset\Q^d$ in affine general position, and all its pairwise
  distances are rational.
  \item For every $d\equiv1\pmod4$, there is an infinite set
  $X_d\subset\Q^d$ in general position, and all its pairwise distances are
  rational.
\end{enumerate}
\end{theorem}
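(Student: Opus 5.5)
The plan is to build on the sphere curves from the proof of Theorem~\ref{thm:geometric-main}, but to arrange that the ambient quadratic form is rationally equivalent to the standard form. Then the curve can be carried into $\Q^d$ by a rational linear isometry.

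\emph{The curve.} Take a harmonic (trigonometric) curve
\[
p(\theta)=\bigl(c_1\cos\theta, s_1\sin\theta,\ldots\bigr),
\]
built from frequencies $1,2,\ldots,m$ with weights $w_k>0$. For rational $\tan(\theta/2)$, every coordinate $\cos k\theta$ and $\sin k\theta$ is rational. Using $\cos k\theta-\cos k\phi$ and the Chebyshev identity, the squared distance is
\[
\norm{p(\theta)-p(\phi)}^2=\sum_k w_k\bigl(2-2\cos k(\theta-\phi)\bigr)=4\sin^2\!\tfrac{\theta-\phi}{2}\,S\!\bigl(\cos(\theta-\phi)\bigr),
\]
where $S$ is a polynomial. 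The positive Chebyshev square decomposition makes $S$ a rational square, $S(x)=q(x)^2$ with $q\in\Q[x]$. Distances are therefore rational whenever the parameters are rational. The obstruction is that the weights $w_k$ enter as squared lengths: the coordinates are $\sqrt{w_k}\cos k\theta$ and $\sqrt{w_k}\sin k\theta$. So the point set lies in $\Q^{d}$ equipped with the diagonal form $\diag(w_1,w_1,w_2,w_2,\ldots)$, not with the standard form.

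\emph{Even $d=2m$.} The form is $\bigoplus_k w_k\langle1,1\rangle$. Each binary form $w\langle 1,1\rangle$ is rationally equivalent to $\langle1,1\rangle$ precisely when $w$ is a sum of two rational squares, because $\langle1,1\rangle$ represents $w$ and the two forms have the same discriminant. So I would first show that the positive Chebyshev square decomposition can be chosen so that every $w_k$ is a sum of two squares. One way is to rescale $q$ by a rational constant. A better way is to show that the $w_k$ from the construction are themselves squares or norms from $\Q(i)$; for instance, expanding $q(\cos t)^2$ via the Fejér--Riesz factorization may give $w_k=|a_k|^2$ with $a_k\in\Q(i)$. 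If this works, each $2\times2$ block maps by a rational similarity, namely multiplication by $a_k\in\Q(i)$, onto the standard plane, and the image lies in $\Q^d$. Affine general position is preserved by linear maps and is inherited from the short-arc convexity lemma, so part (i) follows.

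\emph{$d\equiv1\pmod4$.} Following the odd-dimensional route in Theorem~\ref{thm:geometric-main}, start from the even-dimensional sphere curve in $\R^{d+1}$ with $d+1\equiv2\pmod4$, and stereographically project from a rational point of the sphere. Stereographic projection is rational, and it rescales distances by the factor $\norm{x-N}\norm{y-N}$ up to a constant. That factor is rational if the pole $N$ is itself a point of the curve at a rational parameter, or a point at rational distance from all curve points. Cosphericity in the image corresponds to coplanarity on the sphere, so general position follows from the affine general position of the curve. The congruence condition should come from matching the discriminant and Hasse invariants: the induced form on the tangent hyperplane at the pole has to be $\Q$-equivalent to the standard form in dimension $d$. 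This appears to require $d+1\equiv2\pmod 4$, where sums of $(d+1)/2$ Gaussian-norm blocks behave well after removing one line. Concretely, I would verify that the orthogonal complement of a rational vector $v$ with $\norm{v}^2$ a square, inside $\langle1\rangle^{d+1}$, is isometric to $\langle1\rangle^{d}$. By Witt cancellation this holds.

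\emph{Main obstacle.} The hardest step is the arithmetic one: ensuring that each weight $w_k$ is a norm from $\Q(i)$ while the decomposition stays positive and $S$ remains a square. I would first try the explicit family $S(x)=\bigl((1+x)/2\bigr)^{m-1}$ or similar, which has binomial weights, and then look for rescalings that make those weights sums of two squares.
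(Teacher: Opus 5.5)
Your overall framework is sound and matches the paper's: a rational quadratic form carried by the harmonic curve, block similarities by elements of $\Q(i)$, and projection from a curve point. There is, however, a genuine gap exactly at the arithmetic step you flag as the main obstacle.

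First, a fixable slip. With all frequencies $1,\ldots,m$ and rational $\tan\frac\theta2$, your chord is $2|\sin\frac{\theta-\phi}2|\,|q(\cos(\theta-\phi))|$. The half-angle sine is irrational in general: $\tan\frac\theta2=1$, $\tan\frac\phi2=0$ gives $\sin\frac\pi4$. Only even frequencies, as in the paper's $z^{2(k+1)}$, turn the prefactor into $\sin(\theta-\phi)\in\Q$.

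The real problem is that (i) rests entirely on a positive Chebyshev decomposition whose weights are all sums of two rational squares, and none of your mechanisms produces one. Multiplying $q$, or the configuration, by $c\in\Q^\times$ multiplies every weight by $c^2$, so it never changes membership in $\mathcal N_i$. Fej\'er--Riesz only returns the factorization $|h|^2$ you already have; the weights are then the bilinear autocorrelation expressions \eqref{eq:lambda-formula} in the coefficients, with no norm structure. Blockwise norms are also more than you need: $\perp_k w_k\langle1,1\rangle\cong_{\Q}I_{2r}$ if and only if $\prod_kw_k\in\mathcal N_i$, since the Hasse invariant is $(\prod_kw_k,-1)_v$. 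But even this single condition needs an input you do not supply.

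The paper supplies it in two different ways.
For $r=d/2$ odd, it carries out your blockwise plan. The Gaussian-norm weights come from Lemma~\ref{lem:simultaneous-norms}: appending $tU_{2m}$ makes each weight ratio of the form $1+\alpha_kt$. The Browning--Matthiesen Hasse principle and weak approximation for $x_\nu^2+y_\nu^2=L_\nu(u,v)$ then provide a suitable rational $t$; this is a deep theorem, not a rescaling.
For $r$ even, it avoids norm conditions entirely by replacing the unit circle with the norm conic $x^2+Dy^2=1$. Chords stay rational, the form becomes $Q_D=\perp_k\langle\lambda_k/D,\lambda_k\rangle$, and the square class of $D$ becomes a free parameter. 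Taking $D=A$ works when $4\mid r$. For $r\equiv2\pmod4$ with $r>2$, one forces $v_2(A)$ odd, writes $A=x^2+k$ with $k=y^2+z^2$ by the three-square theorem, and takes $D=kA$.

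For (ii) there is an additional unaddressed requirement. Projecting from a standard realization in $\Q^{d+1}$ needs (i) for $d+1=2r$ with $r$ odd, which is the one case where the paper needs Browning--Matthiesen. Moreover, since $I_{d+1}\cong_{\Q}\langle\norm{N}^2\rangle\perp N^\perp$, the complement is standard only if $\norm{N}^2=\sum_k\lambda_k$ (which equals $\sum_jb_j^2$) is a rational square. You assume this but never arrange it. Imposing it puts the appended coefficient $t$ on the conic $t^2+\sum_{j<m}b_j^2=s^2$. The norm conditions then involve quadratic rather than linear polynomials in the parameter, so Lemma~\ref{lem:simultaneous-norms} does not apply as it stands.

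Your mechanism also would not distinguish $d\equiv1$ from $d\equiv3\pmod4$: norm blocks plus a square radius give a standard complement in every odd dimension. So the reason you give for the congruence condition is not the operative one.

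The paper instead projects from the pole of the twisted quadric, with $Q_D\cong_{\Q}\langle1/D\rangle\perp h_D$, where $Q_D$ itself is typically non-standard. The weights of Lemma~\ref{lem:effective-weights} satisfy $\sum_k\lambda_k=1$ and $v_2(A)\equiv m\pmod2$. Hence $\det h_D\equiv A^2D^{-2m}$ is a square for every $D$, and $D=A$ or $D=kA$ makes $\epsilon_v(h_D)=1$ everywhere, with no Browning--Matthiesen input. This residual freedom in $D$ is what singles out an odd number of blocks. With an even number, the determinant forces $D\equiv\sum_k\lambda_k$, and the Hilbert-symbol conditions of Section~\ref{sec:boundaries} remain.
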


The distinction between Theorems~\ref{thm:geometric-main} and
\ref{thm:arithmetic-main} is controlled by rational quadratic forms.  The
geometric construction naturally carries a positive-definite rational
quadratic form that is an orthogonal sum of two-dimensional diagonal
blocks.  To place the curve in $\Q^d$ with its standard Euclidean metric,
that form must have square determinant and trivial Hasse invariant at every
finite place.  The determinant square class, called the
\emph{characteristic} in the integral-point-set literature
\cite{Kurz2006}, records only the first condition and is not sufficient by
itself in higher dimensions.  We make this failure explicit by constructing
a full-dimensional nine-point integral point set in $\R^8$ of
characteristic $1$ which has no congruent realization in $\Q^8$; see
Proposition~\ref{prop:char-counterexample}.  A related application of
the same local classification is due to Kominers \cite{Kominers2026}, who
uses square classes, norm groups, and Hilbert symbols to study rational
realizations of regular simplices.  Here the quadratic forms are
instead constrained by Chebyshev chord identities along an entire harmonic
curve.

Finite subsets give consequences for integral point sets.  Here an
\emph{integral point set} in $\R^d$ is a finite full-dimensional set with
all pairwise distances integral; its coordinates need not lie in $\Z^d$.
By contrast, following Kohnert and Kurz \cite{KohnertKurz2009}, an
$n_d$-cluster is an $n$-point integral point set in general position that
admits a realization in $\Z^d$.  We call any finite integral point set
\emph{primitive} if the greatest common divisor of its nonzero pairwise
distances is $1$; for an $n_d$-cluster this agrees with the convention of
\cite{KurzNollRathbunSimmons2014}.

\begin{corollary}[Finite integral consequences]\label{cor:finite-main}
Let $d\geq1$ and $n\geq d+1$.  In each of the following cases, there is an
$n$-point integral point set $P$ with the stated ambient-space and position
properties:
\begin{enumerate}[label=\textup{(\roman*)}]
  \item $P\subset\R^d$ in affine general position, for every $d$;
  \item $P\subset\R^d$ in general position, if $d$ is odd;
  \item $P\subset\Z^d$ in affine general position, if $d$ is even;
  \item $P\subset\Z^d$ in general position, if $d\equiv1\pmod4$.
\end{enumerate}
\end{corollary}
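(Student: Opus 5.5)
The plan is to derive each case directly from Theorems~\ref{thm:geometric-main} and~\ref{thm:arithmetic-main} by choosing a finite subset and clearing denominators. Fix $d\geq1$ and $n\geq d+1$. For (i), let $X_d$ be the set of Theorem~\ref{thm:geometric-main}. Since it is countably infinite, we may choose any $n$ distinct points $x_1,\ldots,x_n\in X_d$. The $\binom n2$ pairwise distances are positive rationals, so let $D$ be a common denominator, and set $P:=\{Dx_1,\ldots,Dx_n\}$. Homotheties preserve affine independence of any $d+1$ points and cosphericity of any $d+2$ points. Hence $P$ is in affine general position, and all distances in $P$ are integers.

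Full-dimensionality of $P$ follows because $n\geq d+1$ and any $d+1$ of its points are affinely independent. So $P$ is an integral point set in the paper's sense. For (ii), take $d$ odd and use the general-position version of $X_d$ from Theorem~\ref{thm:geometric-main}; the same scaling preserves the extra condition that no $d+2$ points are cospherical.

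For (iii) and (iv) we need the stronger conclusion $P\subset\Z^d$, so we start from the sets $X_d\subset\Q^d$ of Theorem~\ref{thm:arithmetic-main}(i) and~(ii), respectively. Again choose $n$ distinct points $x_1,\ldots,x_n$. Let $D$ be a positive integer that is simultaneously a common denominator of all $dn$ coordinates and of all $\binom n2$ distances, for instance the product of the two common denominators. Then $Dx_i\in\Z^d$ and $\norm{Dx_i-Dx_j}=D\norm{x_i-x_j}\in\Z$. The position properties transfer exactly as before: affine general position in case (iii), and general position in case (iv), where $d\equiv1\pmod4$.

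No step here is a genuine obstacle, since all the difficulty sits in the two theorems being invoked. The only points needing care are a few small checks. First, the chosen points must be distinct, which is automatic because the sets are infinite. Second, rescaling must be by a single factor, since the scaling depends on the subset but must be uniform within it. Third, the scaled set must meet the definition of an integral point set, namely being finite, full-dimensional and integral; this holds because $n\geq d+1$.
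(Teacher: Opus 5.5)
Your proposal is correct and follows essentially the same argument as the paper. Choose $n$ points from the sets of Theorems~\ref{thm:geometric-main} and~\ref{thm:arithmetic-main}, scale by a single integer that clears the distance denominators (and, in the rational-coordinate cases, the coordinate denominators), and note that positive homotheties preserve affine independence and the absence of $d+2$ cospherical points; your explicit check that full-dimensionality follows from $n\geq d+1$ is a detail the paper leaves implicit.
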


\begin{proof}
Choose $n$ points from the appropriate set in
Theorems~\ref{thm:geometric-main} and
\ref{thm:arithmetic-main}.  For a finite rational distance set
$Y\subset\R^d$, choose a positive integer $L$ divisible by every distance
denominator.  Then $LY$ has integral pairwise distances.  If
$Y\subset\Q^d$, include the coordinate denominators as well; then
$LY\subset\Z^d$.  Scaling by a positive factor preserves affine
dependence, the property that no $d+2$ points lie on a common sphere, and
order type.
\end{proof}

\begin{remark}[Infinitely many primitive similarity types]
\label{rem:similarity-types}
Fix $d\geq2$ and $n\geq d+1$.  On one of the analytic arcs in the proof,
fix a set $A=\{a_1,\ldots,a_{n-1}\}$ of points and vary the last rational
parameter along the injective arc.  For a fixed unlabeled similarity type,
choose one representative.  The fixed distance $\norm{a_1-a_2}$ must,
after rescaling, correspond to one of the finitely many distances of the
representative, so only finitely many similarity ratios are possible.
Consequently, the distance vector from the moving point to any $d$
affinely independent points of $A$ has only finitely many possible values.
Prescribed distances to $d$ affinely independent centers in $\R^d$
determine at most two points, interchanged by reflection in their affine
hyperplane.  Hence each unlabeled similarity type has a finite parameter
fiber.  Since there are infinitely many rational parameters, infinitely
many similarity types occur.

For each resulting finite configuration, scale first by an integer that
clears all distance denominators and then by the reciprocal of the greatest
common divisor of the resulting integral distances.  This produces a
primitive integral point set without changing its similarity type or its
position properties.  We therefore obtain infinitely many pairwise
non-similar primitive $n$-point integral point sets in affine general
position, and in general position when $d$ is odd.  This is a non-lattice
analogue of the
Erd\H{o}s--Noll infinite-or-bust question for $n_d$-clusters
\cite{KohnertKurz2009,KurzNollRathbunSimmons2014}.
\end{remark}

Kohnert and Kurz observed that, for $d\geq3$, no integral point set in
semi-general position with at least $d+5$ points, and no $n_d$-cluster with
$n\geq d+5$, was then known \cite[p.~2107]{KohnertKurz2009}.
Corollary~\ref{cor:finite-main} answers the former existence question
uniformly, to the best of our knowledge.  The lattice general-position
question is answered here in the dimensions stated explicitly in
Corollary~\ref{cor:finite-main} and
Theorem~\ref{thm:primitive-intro}.

A second finite consequence controls not merely affine independence but the
full order type.  For a labeled configuration
$p_1,\ldots,p_n\in\R^d$, its order type is the sign pattern
\[
 \chi(i_0,\ldots,i_d)=
 \operatorname{sgn}\det
 \begin{pmatrix}
  1&p_{i_0}^{\mathsf T}\\
  \vdots&\vdots\\
  1&p_{i_d}^{\mathsf T}
 \end{pmatrix}
\]
over all ordered $(d+1)$-tuples of distinct indices.  We call this order type
\emph{alternating} if, after reversing the ambient orientation if
necessary, $\chi(i_0,\ldots,i_d)=+1$ whenever
$i_0<\cdots<i_d$.  Recall also that the cyclic polytope $C(n,d)$ is the
combinatorial type of the convex hull of $n$ points
$\mu_d(t_i)=(t_i,t_i^2,\ldots,t_i^d)$ on the moment curve, with
$t_1<\cdots<t_n$ \cite{Ziegler1995}.

\begin{theorem}[Integral-distance cyclic polytopes]\label{thm:cyclic-intro}
For all $d\geq2$ and $n\geq d+1$, there is a labeled configuration of
$n$ points in $\R^d$ with integral pairwise distances and the alternating
order type.  Consequently, its convex hull is a realization of the cyclic
polytope $C(n,d)$.
\end{theorem}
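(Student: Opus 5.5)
The plan is to take $n$ points from the infinite rational distance set $X_d$ of Theorem~\ref{thm:geometric-main}, which lies on a short real-analytic harmonic arc $p\colon I\to\R^d$, label them in increasing order of their rational parameters $t_1<\cdots<t_n$, and rescale by a common denominator as in the proof of Corollary~\ref{cor:finite-main}. Rescaling by a positive factor preserves order type, so the task reduces to showing that the arc has alternating order type. Concretely, for $t_{i_0}<\cdots<t_{i_d}$ in $I$, the affine determinant
\[
 D(t_{i_0},\ldots,t_{i_d})=\det\begin{pmatrix}1&p(t_{i_0})^{\mathsf T}\\ \vdots&\vdots\\ 1&p(t_{i_d})^{\mathsf T}\end{pmatrix}
\]
should always have the same sign, i.e.\ the arc should be locally convex in the sense of having order $d$.

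For this I will use the divided-difference factorization announced in the abstract. Performing row operations that replace the rows by successive divided differences gives
\[
 D=\prod_{j<k}(t_{i_k}-t_{i_j})\cdot\det\bigl(1,\,p[t_{i_0}],\,p[t_{i_0},t_{i_1}],\ldots,p[t_{i_0},\ldots,t_{i_d}]\bigr),
\]
and the remaining determinant reduces to $\det\bigl(p[t_{i_0},t_{i_1}],\ldots,p[t_{i_0},\ldots,t_{i_d}]\bigr)$. By the Hermite--Genocchi formula each $k$-th divided difference is a continuous function of the nodes that tends to $p^{(k)}(t_*)/k!$ as all nodes coalesce at $t_*$. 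Hence the reduced determinant is a continuous function on $\bar I^{\,d+1}$ (including the diagonals), and at the diagonal it equals $\det(p'(t_*),\ldots,p^{(d)}(t_*))/\prod_k k!$. If this Wronskian-type determinant is nonzero at some interior point $t_*$, then shrinking $I$ around $t_*$ keeps the reduced determinant bounded away from zero with constant sign. Since the Vandermonde factor is positive for increasing nodes, $D$ has constant sign, and after possibly reversing orientation the order type is alternating. This is exactly what the earlier section on ``Chebyshev and convexity'' provides for the short arcs, and it is consistent with the affine general position already established in Theorem~\ref{thm:geometric-main}. If the even-dimensional or stereographic constructions are stated only on some arc, I will restrict to a subarc around a point where the Wronskian is nonzero; that subarc still contains infinitely many rational parameters.

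The main obstacle is checking that $\det(p',\ldots,p^{(d)})\neq0$ somewhere on the arc. For a harmonic curve $p(t)=\sum_j(a_j\cos\omega_jt,\,a_j\sin\omega_jt)$ with distinct nonzero frequencies (after reparametrization to trigonometric form), this determinant is a product of the $a_j\omega_j$ factors times a Vandermonde in the $\omega_j^2$, up to a constant. In odd dimension the curve is obtained by stereographic projection, and nonvanishing of the Wronskian is preserved by projective maps applied to the curve in $\R^{d+1}$ through the lifted $(d+1)$-determinant. The final step is the combinatorial one: by Gale's evenness criterion, or the standard fact in \cite{Ziegler1995} that the convex hull of a configuration with alternating order type is combinatorially $C(n,d)$, the integral configuration realizes the cyclic polytope.
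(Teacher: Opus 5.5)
\textbf{Verdict: gap in the odd-dimensional case.} Your overall plan is the paper's. You take ordered rational parameters on the short arc and show the affine determinant has constant sign on increasing $(d+1)$-tuples. That gives the alternating chirotope, hence $C(n,d)$, and a single dilation makes the distances integral. For even $d$ your divided-difference argument is exactly the mechanism of Lemma~\ref{lem:short-arc}, and it works.

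\textbf{The gap.} In odd $d$ you assert that nonvanishing of the Wronskian passes to the projected curve because projective maps preserve it. That principle does not apply. The map $\sigma_N$ is a central projection from $\R^{d+1}$ onto $H\cong\R^d$, taken from a point $N$ of the source curve itself. It is not a projective change of coordinates, and it lowers the dimension. Write $p-N=\alpha(q-N)$ with $\alpha=2\rho_r/\norm{q-N}^2>0$, and identify $\R\oplus H$ with $\R^{d+1}$ via $(s,h)\mapsto h-sN$. Then $\det(p',\ldots,p^{(d)})$ is a nonzero constant times
\[
 \alpha^{d+1}\det\bigl(q-N,\,q',\,\ldots,\,q^{(d)}\bigr).
\]
This is not the source Wronskian $\det(q',\ldots,q^{(d+1)})$. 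It vanishes at the pole, and its nonvanishing at $t_*$ says precisely that $N$ lies off the osculating hyperplane of $q$ at $q(t_*)$. For a curve that is merely locally convex this can fail, and then the projected curve is degenerate at $t_*$. So you need an extra input about where $N$ sits relative to the arc, and the proposal does not supply it.

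\textbf{How to repair it.} The missing ingredient is the one the paper stresses in Proposition~\ref{prop:odd-geometry}: Lemma~\ref{lem:short-arc} is applied to the \emph{closed} source arc containing the pole. The quotient $G$ from that lemma is nonzero on $[0,\delta]^{d+2}$. Put one node at $0$ and let the other $d+1$ nodes coalesce at $t_*\in(0,\delta]$; this gives
\[
 \det\bigl(q(t_*)-N,\,q'(t_*),\,\ldots,\,q^{(d)}(t_*)\bigr)
 =\pm\Bigl(\prod_{k=1}^{d}k!\Bigr)t_*^{d+1}\,G(0,t_*,\ldots,t_*)\neq0,
\]
which repairs your step. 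Alternatively, this analytic determinant cannot vanish identically, since otherwise $q$ would lie in a hyperplane through $N$.

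\textbf{The paper's route.} The paper avoids the projected Wronskian altogether. The radial-rescaling argument of Proposition~\ref{prop:odd-geometry} already gives affine independence for every $(d+1)$-tuple of distinct \emph{real} parameters in the short interval. So the affine determinant is a continuous, nonvanishing function on the connected chamber $\{s_0<\cdots<s_d\}$, and therefore has constant sign. That connectivity argument handles both parities at once, and it is exactly how the paper argues.
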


To the best of our knowledge, Theorem~\ref{thm:cyclic-intro} is new.

In dimension three the rational-coordinate construction can be written in
closed form, without an abstract rational isometry.  Together with the
three-dimensional rational-to-lattice embedding theorem of Marshall and
Perlis \cite[Theorem~6]{MarshallPerlis2013}, it gives the following
primitive result.

\begin{theorem}[Primitive clusters in dimension three]
\label{thm:primitive-intro}
For every $n\geq4$ there are infinitely many pairwise non-similar
primitive $n_3$-clusters.
\end{theorem}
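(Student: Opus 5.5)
The plan has two parts. First, produce infinitely many rational-distance configurations in $\Q^3$ in general position. Second, carry each one into $\Z^3$ by a rational similarity and make it primitive, as in Remark~\ref{rem:similarity-types}.

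The rational configurations come from the odd-dimensional construction in Theorem~\ref{thm:geometric-main}: a harmonic curve on a sphere in $\R^4$ whose rational-parameter points have pairwise rational distances, followed by stereographic projection to $\R^3$. We want a version with coordinates in $\Q^3$. Dimension $3$ is not covered by Theorem~\ref{thm:arithmetic-main}(ii), since $3\not\equiv 1\pmod 4$, so I would write the curve down explicitly. Choose the Chebyshev square decomposition so that the associated rational quadratic form on $\Q^4$ is isometric to the standard sum of four squares. A concrete rational isometry then writes the curve with rational coordinates. Stereographic projection from a rational point of the sphere is given by rational functions and scales distances by the factor $\norm{x-x'}\mapsto \norm{x-x'}/(\norm{x-N}\norm{x'-N})$ up to a constant. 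For this to give rational distances we need $\norm{p(t)-N}\in\Q$ for rational $t$. This should hold because the pole $N$ can be taken to be a point of the curve itself, at a rational or limiting parameter.

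Next come the general-position properties. Local convexity of short arcs comes from the divided-difference factorization of the affine determinant, and the absence of cospherical $5$-tuples comes from the odd-dimensional argument. Both are available from the earlier sections. We therefore get an infinite set $X\subset\Q^3$ in general position with all pairwise distances rational.

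Then fix $n\ge4$ and a finite $Y\subset X$ with $|Y|=n$. Scale $Y$ to clear coordinate and distance denominators, which puts it in $\Z^3$ with integral distances. To obtain primitivity, divide by $g$, the gcd of the distances. This leaves $Y/g$ in $\Q^3$ but not necessarily in $\Z^3$. Here I would invoke Marshall--Perlis \cite[Theorem~6]{MarshallPerlis2013}: a rational point configuration in $\Q^3$ whose squared distances are integers is congruent to one in $\Z^3$. Because the congruence is an isometry, distances and general position are unchanged.

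Infinitely many pairwise non-similar examples follow as in Remark~\ref{rem:similarity-types}. Fix $n-1$ points, vary the last rational parameter, and note that each similarity type has a finite parameter fiber.

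The main obstacle is the explicit three-dimensional parametrization. One needs a Chebyshev decomposition whose quadratic form has square determinant and trivial Hasse invariants at every place, and a projection pole making $\norm{p(t)-N}$ rational. I would first try the simplest two-block form $\diag(1,1,c,c)$ with $c$ a sum of two squares, which is rationally equivalent to $x_1^2+x_2^2+x_3^2+x_4^2$, and then verify the pole condition by direct computation.
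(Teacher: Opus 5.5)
Your outer argument is the paper's: an infinite rational-distance set in $\Q^3$ in general position, then the anchor-point argument of Remark~\ref{rem:similarity-types}, clearing denominators, dividing by the gcd, and Marshall--Perlis. The gap is in producing that set in $\Q^3$, because the conditions you list are not sufficient.

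The stereographic image lies in the rational hyperplane $N^{\perp}$ with the restricted form $h=Q|_{N^{\perp}}$, and $Q\cong_{\Q}\langle\rho\rangle\perp h$ with $\rho=Q(N)$. So $Q\cong_{\Q}I_4$ gives $h\cong_{\Q}I_3$ only if also $\rho\in\Q^{\times2}$. The pole condition you single out, $\norm{p(t)-N}\in\Q$, is automatic from \eqref{eq:chord} once $N=p(0)$ lies on the curve; it is not the obstruction.

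The two-block identity that is actually available, \eqref{eq:base-cheb}, realizes your $\diag(1,1,c,c)$ with $c=1/4$ after dividing by $4$. On the unit circle it gives $Q=\langle4,4,1,1\rangle\cong I_4$ but $\rho=5$, and $N^{\perp}=\{(a,b,-4a,e)\}$ carries $20a^2+4b^2+e^2\cong\langle5,1,1\rangle$. This is exactly the set $X_3$ of Proposition~\ref{prop:odd-geometry}, and every nondegenerate tetrahedron in it has characteristic $5$. Characteristic is unchanged by congruence and rational rescaling, and any configuration in $\Q^3$ has characteristic $1$. Hence no subset of four or more points can be turned into an $n_3$-cluster, and Marshall--Perlis never becomes applicable. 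Multiplying the weights by a nonsquare to change the class of $\rho$ does not help, because it makes the chords irrational.

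The missing idea is to change the conic rather than the weights. Run \eqref{eq:base-cheb} on the norm conic $x^2+5y^2=1$, i.e.\ take $D=5$ in Lemma~\ref{lem:norm-chord}. Then $Q_5=\langle4/5,4\rangle\perp\langle1/5,1\rangle$, the pole satisfies $Q_5(N)=1$, and $J(a,b,-4a,e)=(2a,2b,e)$ is an explicit isometry from $(N^{\perp_{Q_5}},Q_5|_{N^{\perp_{Q_5}}})$ onto $(\Q^3,I_3)$. Feed Lemma~\ref{lem:Q-projection-transfer} the affine independence of every five points of the closed source arc including $N$; the paper verifies this through the factorization \eqref{eq:explicit-det}. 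That gives Theorem~\ref{thm:P3}. With that set in place of yours, the rest of your argument coincides with the paper's proof.
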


\subsection*{Outline of the construction}

The proof uses four main ingredients.

First, a product identity for the Chebyshev polynomials of the second kind
allows us to choose a rational linear combination of even-indexed
polynomials with square equal to a positive rational combination of
squares.  Second, each
summand is represented by a two-dimensional harmonic block.  The resulting
harmonic curve has rational chord lengths whenever both circle parameters
are rational.  Third, the harmonic Wronskian
factors into
  two Vandermonde determinants.  A divided-difference argument uses the
  nonvanishing of this Wronskian to prove affine independence of any $d+1$ points on
a sufficiently short arc.  Finally, even-dimensional examples are obtained
directly on the source sphere, while stereographic projection from an
endpoint of the source arc produces the odd-dimensional examples.  The inverse image of a
sphere in the target hyperplane is an affine hyperplane section of the
source sphere.

Section~\ref{sec:chebyshev-convexity} develops the algebraic and local
convexity mechanisms.  Section~\ref{sec:geometry} proves
Theorem~\ref{thm:geometric-main}.  Section~\ref{sec:arithmetic} proves
Theorem~\ref{thm:arithmetic-main} by norm conics and rational quadratic
forms.  Section~\ref{sec:consequences} proves
Theorem~\ref{thm:cyclic-intro}, develops the explicit three-dimensional
curve, and proves Theorem~\ref{thm:primitive-intro}.  We finish with the
rational-realization criterion and the precise scope of the method.

%% file: 02-chebyshev-and-convexity.tex
\section{Chebyshev squares, rational chord lengths, and local convexity}
\label{sec:chebyshev-convexity}

For $j\geq0$, let $U_j$ denote the Chebyshev polynomial of the second kind,
normalized by
\[
  U_j(\cos\theta)=\frac{\sin((j+1)\theta)}{\sin\theta},
  \qquad \theta\notin\pi\Z,
\]
with the values at $x=\pm1$ understood by continuity, and set $U_{-1}=0$.
We use the classical product linearization formula obtained by specializing
the ultraspherical product formula at parameter $1$; see
\cite[Theorem~6.8.2]{AndrewsAskeyRoy1999}.  For $0\leq i\leq j$, the
specialization needed below telescopes to
\begin{equation}\label{eq:cheb-product}
 U_{2i}(x)U_{2j}(x)
 =U_{i+j}(x)^2-U_{j-i-1}(x)^2.
\end{equation}
Indeed, after putting $x=\cos\theta$ and multiplying by $\sin^2\theta$,
this is the elementary identity
\[
 \sin((2i+1)\theta)\sin((2j+1)\theta)
 =\sin^2((i+j+1)\theta)-\sin^2((j-i)\theta).
\]
Since both sides of \eqref{eq:cheb-product} are polynomials in $x$,
the identity holds for every $x$.

\subsection{Chebyshev square decompositions and rational chords}

Parametrize the rational points of the unit circle other than $-1$ by
\begin{equation}\label{eq:circle}
 z(t)=\frac{1+it}{1-it}
 =\frac{1-t^2}{1+t^2}+i\frac{2t}{1+t^2},\qquad t\in\Q.
\end{equation}
Classical positive linearization concerns expansions in the basis
$\{U_k\}$.  The decomposition below is of a different kind: it expresses
the square of a linear combination of even-indexed Chebyshev polynomials as
a strictly positive combination of the nonlinear family $\{U_k^2\}$.  The
following lemma is the algebraic--geometric input specific to our
construction.

\begin{lemma}
\label{lem:chebyshev-chord}
For every integer $r\geq1$, let
\[
 K_r=
 \begin{cases}
   \{0,1,\ldots,r-1\},&r\text{ odd},\\
   \{1,2,\ldots,r\},&r\text{ even}.
 \end{cases}
\]
Then there are $P_r\in\Q[x]$ and $\lambda_k\in\Q_{>0}$, $k\in K_r$,
with the following properties.
\begin{enumerate}[label=\textup{(\roman*)}]
\item
\begin{equation}\label{eq:master-square}
 P_r(x)^2=\sum_{k\in K_r}\lambda_kU_k(x)^2.
\end{equation}
After multiplying both sides of this identity by a positive rational square, one may
take $P_r\in\Z[x]$ and all $\lambda_k\in\Z_{>0}$.
\item For $|z|=1$, define
\begin{equation}\label{eq:Gamma}
 \Gamma_r(z)=
 \left(\sqrt{\lambda_k}\,\Re z^{2(k+1)},
       \sqrt{\lambda_k}\,\Im z^{2(k+1)}\right)_{k\in K_r}
 \in\R^{2r}.
\end{equation}
Its image lies on the sphere of squared radius
\begin{equation}\label{eq:rho}
 \rho_r=\sum_{k\in K_r}\lambda_k\in\Q_{>0}.
\end{equation}
If $z,w\in S^1\cap\Q(i)$ and $z\overline w=c+is$, then
\begin{equation}\label{eq:chord}
 \norm{\Gamma_r(z)-\Gamma_r(w)}=2|sP_r(c)|\in\Q.
\end{equation}
\end{enumerate}
\end{lemma}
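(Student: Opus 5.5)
The plan is to build $P_r$ as a rational combination of even-indexed Chebyshev polynomials, $P_r=\sum_{i=0}^{m}a_iU_{2i}$, and to expand $P_r^2$ with \eqref{eq:cheb-product}. Using \eqref{eq:cheb-product} for $i=j$ (where $U_{-1}=0$) and for $i<j$ gives
\[
P_r^2=\sum_{i}a_i^2U_{2i}^2+\sum_{i<j}2a_ia_j\bigl(U_{i+j}^2-U_{j-i-1}^2\bigr).
\]
The coefficient of $U_k^2$ is therefore
\[
\lambda_k=\sum_{i\le j,\ i+j=k}c_{ij}a_ia_j-\sum_{j-i-1=k}2a_ia_j ,
\]
where $c_{ii}=1$ and $c_{ij}=2$ for $i<j$. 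The indices that occur lie in $\{0,\dots,2m\}$. The task is to choose the $a_i$ so that $\lambda_k>0$ exactly for $k\in K_r$ and $\lambda_k=0$ otherwise.

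\emph{Odd $r=2m+1$.} Here $K_r=\{0,\dots,2m\}$. Take $a_i=\eps^i$ with $\eps\in\Q$ small, say $\eps<1/(4m+4)$. For each $k\le 2m$ there is a pair $i\le j\le m$ with $i+j=k$, so the positive part of $\lambda_k$ is at least $\eps^k$. The negative terms have $i+j=2i+k+1\ge k+1$, so they total at most $2m\,\eps^{k+1}$. Hence $\lambda_k>0$ for all $k\in K_r$.

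\emph{Even $r=2m$.} Here $K_r=\{1,\dots,2m\}$, so the $U_0^2$ coefficient must vanish. That coefficient is
\[
\lambda_0=a_0^2-2\sum_i a_ia_{i+1}.
\]
Set $a_0=1$ and $a_i=\eps^{i-1}$ for $2\le i\le m$. Then solve the equation $\lambda_0=0$, which is linear in $a_1$:
\[
a_1=\frac{1-2\sum_{i\ge2}a_ia_{i+1}}{2(1+a_2)}\in\Q .
\]
For $m=1$ this gives $a_1=1/2$, and indeed $(U_0+\tfrac12U_2)^2=U_1^2+\tfrac14U_2^2$.

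For small $\eps$ we have $a_1\approx1/2$. We then check positivity for each $k\ge1$:
\begin{itemize}
\item For $k=1$, the term $2a_0a_1\approx1$ dominates the negatives, which are $O(\eps)$.
\item For $2\le k\le 2m$, the positive term $2a_1a_{k-1}$ (or $a_1^2$, $a_m^2$, $2a_0a_k$ as appropriate) has order $\eps^{k-2}$. The negative terms $a_ia_{i+k+1}$ are of order at most $\eps^{k-1}$.
\end{itemize}
I expect this bookkeeping of orders of $\eps$, especially in the even case with the forced value of $a_1$, to be the main obstacle. It is, however, a finite dominance check. Finally, multiplying by the square of a common denominator makes $P_r\in\Z[x]$ and $\lambda_k\in\Z_{>0}$.

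\emph{Part (ii).} For the sphere, $|z|=1$ gives $\norm{\Gamma_r(z)}^2=\sum\lambda_k|z^{2(k+1)}|^2=\rho_r$. For the chord, write $u=z\overline w=e^{i\theta}$, so $c=\cos\theta$ and $s=\sin\theta$. Then
\[
\norm{\Gamma_r(z)-\Gamma_r(w)}^2=\sum_k\lambda_k\,|u^{2(k+1)}-1|^2=\sum_k4\lambda_k\sin^2((k+1)\theta).
\]
Since $\sin((k+1)\theta)=s\,U_k(c)$, this equals $4s^2\sum_k\lambda_kU_k(c)^2=4s^2P_r(c)^2$ by \eqref{eq:master-square}. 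Taking square roots gives \eqref{eq:chord}. When $z,w\in\Q(i)$ we have $c,s\in\Q$, and since $P_r\in\Q[x]$ the chord length is rational.
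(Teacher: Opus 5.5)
Your proposal is correct. Part (ii) is the same computation as in the paper. The difference lies in how you obtain the positive weights in part (i).

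The paper argues by induction on $m$. It appends a small top coefficient $b_m=\eps$ and uses that every weight is a polynomial in the coefficients, so the old weights stay positive for small $\eps$. The two new top weights $2b_{m-1}\eps$ and $\eps^2$ are then read off from \eqref{eq:extreme-lambda}. In the even case it starts from $(2U_0+U_2)^2=4U_1^2+U_2^2$ and re-solves for $\widetilde b_1$ at each step to keep $\lambda_0=0$, noting that $\widetilde b_1\to b_1$.

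You instead make one geometric choice of all coefficients at once and compare orders of $\eps$ directly: $a_i=\eps^i$ in the odd case, and $a_0=1$, $a_i=\eps^{i-1}$ for $i\ge2$, with $a_1$ solved from $\lambda_0=0$, in the even case.

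\begin{itemize}
\item Your route gives explicit coefficients. In the odd case your estimate already shows that any rational $0<\eps<1/(2m)$ works. This is close in spirit to the paper's later Lemma~\ref{lem:effective-weights}, which also uses geometric coefficients $b_j\propto\xi^j$.
\item The paper's induction avoids index bookkeeping. It is also the template reused in Claims~1 and~2 of Proposition~\ref{prop:even-rational}. There one appends a new coefficient while controlling $2$-adic valuations or norm conditions of the old weights, through ratios $\lambda_k(t)/\lambda_k=1+\alpha_kt$.
\end{itemize}

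One small repair is needed in your even-case bookkeeping. For $m+2\le k\le 2m-1$, none of the terms you list is available. Also, $2a_0a_k$ has order $\eps^{k-1}$, not $\eps^{k-2}$. The clean statement is as follows.
\begin{itemize}
\item Every positive term $c_{ij}a_ia_j$ with $i+j=k$ and $i\ge1$ has order exactly $\eps^{k-2}$.
\item Such a pair exists for every $2\le k\le 2m$: take $(1,k-1)$ when $k\le m+1$, and $(k-m,m)$ when $k\ge m+2$.
\item Every negative term $2a_ia_{i+k+1}$ is $O(\eps^{k})$.
\end{itemize}
So dominance holds with two orders to spare. You should also say explicitly that $\eps$ is a positive rational.
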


\begin{proof}
For rational coefficients $b_0,\ldots,b_m$, put
\[
 B_m(x)=\sum_{j=0}^{m}b_jU_{2j}(x).
\]
Applying \eqref{eq:cheb-product} to the cross terms gives
\begin{equation}\label{eq:lambda-expansion}
 B_m(x)^2=\sum_{k=0}^{2m}\lambda_kU_k(x)^2,
\end{equation}
where
\begin{equation}\label{eq:lambda-formula}
 \lambda_k=
 \sum_{\substack{i+j=k\\0\leq i,j\leq m}}b_ib_j
 -2\sum_{\substack{j-i-1=k\\0\leq i<j\leq m}}b_ib_j.
\end{equation}
The first sum is over ordered pairs.  Since $U_k^2$ has degree $2k$,
the expansion is unique.  In particular,
\begin{equation}\label{eq:extreme-lambda}
 \lambda_0=b_0^2-2\sum_{j=0}^{m-1}b_jb_{j+1},
 \qquad
 \lambda_{2m-1}=2b_{m-1}b_m,\quad
 \lambda_{2m}=b_m^2.
\end{equation}
Here the last two formulas are used for $m\geq1$.

Suppose first that $r=2m+1$.  We construct positive rational
$b_0,\ldots,b_m$ for which all the weights
$\lambda_0,\ldots,\lambda_{2m}$ are positive.  For $m=0$, take
$B_0=U_0$.  Given the construction at level $m-1$, retain its positive
coefficients and append $b_m=\eps$, where $\eps>0$ is rational.  At
$\eps=0$, the weights through index $2m-2$ are precisely the old positive
weights.  Since every weight is a polynomial in the coefficients, they
remain positive for all sufficiently small positive rational $\eps$.
The two new weights are $2b_{m-1}\eps$ and $\eps^2$ by
\eqref{eq:extreme-lambda}.

Now let $r=2m$.  We construct positive rational $b_0,\ldots,b_m$ for
which $\lambda_0=0$ and $\lambda_1,\ldots,\lambda_{2m}>0$.  The base case
is
\begin{equation}\label{eq:base-cheb}
 (2U_0+U_2)^2=4U_1^2+U_2^2.
\end{equation}
For the induction step, where $m\geq2$, assume that positive
$b_0,\ldots,b_{m-1}$ have the required property at level $m-1$.  Set
$\widetilde b_m=\eps>0$, retain
$\widetilde b_j=b_j$ for $j\neq1,m$, and define
\[
 \widetilde b_1=
 \frac{b_0^2-2\sum_{j=2}^{m-1}\widetilde b_j\widetilde b_{j+1}}
      {2(b_0+\widetilde b_2)},
\]
with an empty sum interpreted as zero.  Substitution in the first formula
of \eqref{eq:extreme-lambda} gives
  $\widetilde\lambda_0=0$.  Moreover, extend the coefficient vector at
level $m-1$ by setting $b_m=0$.  The corresponding equality at level
$m-1$ gives
\[
 b_0^2=2b_1(b_0+b_2)+2\sum_{j=2}^{m-2}b_jb_{j+1},
\]
so $\widetilde b_1\to b_1>0$ as $\eps\to0$.  Hence all the old positive
weights, and all the coefficients $\widetilde b_j$, remain positive for a
sufficiently small positive rational $\eps$.  The two new top weights are
$2\widetilde b_{m-1}\eps$ and $\eps^2$.  After relabeling
$\widetilde b_j$ as $b_j$, take $P_r=B_m$; this proves (i) in both parity
cases.  Finally, choose a positive integer $L$ such that
$LP_r\in\Z[x]$ and $L^2\lambda_k\in\Z_{>0}$ for every $k$; replacing
$P_r$ by $LP_r$ and $\lambda_k$ by $L^2\lambda_k$ gives the integral form.

For (ii), the constant-norm assertion follows immediately from $|z|=1$.
Write $z\overline w=e^{i\phi}$, so $c=\cos\phi$ and $s=\sin\phi$.
For each block,
\[
 |z^{2(k+1)}-w^{2(k+1)}|^2=4\sin^2((k+1)\phi).
\]
Consequently,
\[
 \begin{aligned}
 \norm{\Gamma_r(z)-\Gamma_r(w)}^2
 &=4\sum_{k\in K_r}\lambda_k\sin^2((k+1)\phi)\\
 &=4s^2\sum_{k\in K_r}\lambda_kU_k(c)^2
  =4s^2P_r(c)^2.
 \end{aligned}
\]
Taking the nonnegative square root proves \eqref{eq:chord}.
\end{proof}

The identity \eqref{eq:base-cheb} is equivalent to
\[
 4\sin^2(2\theta)+\sin^2(3\theta)
 =\sin^2\theta\,(4\cos^2\theta+1)^2;
\]
it is the first nontrivial member of a uniform family.  Apart from the
choice of positive weights and frequencies, $\Gamma_r$ is a generalized
trigonometric moment curve;
compare Schoenberg's classical convex trigonometric curve
\cite{Schoenberg1954}.  For related local geometry of trigonometric moment
curves, compare \cite{BarvinokLeeNovik2013}.  The fact that chords joining
points corresponding to rational parameters have rational length, however,
is the arithmetic consequence of
Lemma~\ref{lem:chebyshev-chord}(ii), not a generic
moment-curve property.

\subsection{Short arcs and affine independence}

We use the following divided-difference lemma.  As the nodes coalesce, the
quotient tends to the corresponding Wronskian determinant.

\begin{lemma}\label{lem:short-arc}
Let $J\subset\R$ be an open interval containing $0$, and let
$\gamma:J\to\R^n$ be real analytic.  If
\[
 \det[\gamma'(0),\gamma''(0),\ldots,\gamma^{(n)}(0)]\neq0,
\]
then there exists $\delta>0$ with $[0,\delta]\subset J$ such that every
$n+1$ distinct points on $\gamma([0,\delta])$ are affinely independent.
\end{lemma}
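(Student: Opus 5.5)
Given distinct parameters $t_0<t_1<\cdots<t_n$ in $[0,\delta]$, the points $\gamma(t_0),\ldots,\gamma(t_n)$ are affinely independent if and only if the $n$ vectors $\gamma(t_i)-\gamma(t_0)$ are linearly independent. I will study the function
\[
 D(t_0,\ldots,t_n)=\det\bigl[\gamma(t_1)-\gamma(t_0),\ldots,\gamma(t_n)-\gamma(t_0)\bigr]
\]
and show that it does not vanish on distinct tuples in $[0,\delta]^{n+1}$ for small $\delta$. The plan has two steps. First, rewrite $D$ via divided differences as a Vandermonde factor times a continuous function. Second, evaluate that continuous function on the diagonal, where it reduces to a nonzero multiple of the Wronskian hypothesis.

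\textbf{Step 1 (factorization).} By column operations, $D$ equals the determinant of the $(n+1)\times(n+1)$ matrix with rows $(1,\gamma(t_i)^{\mathsf T})$. Apply Newton's divided-difference elimination, subtracting rows successively and dividing by node differences. This gives
\[
 D(t_0,\ldots,t_n)=\prod_{0\le i<j\le n}(t_j-t_i)\cdot
 \det\bigl[\gamma[t_0,t_1],\,\gamma[t_0,t_1,t_2],\,\ldots,\,\gamma[t_0,\ldots,t_n]\bigr],
\]
where $\gamma[t_0,\ldots,t_k]$ is the componentwise $k$-th divided difference. This is the standard identity expressing a generalized Vandermonde determinant as the ordinary Vandermonde times a determinant of divided differences. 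I will prove it by induction on the number of nodes, or cite it as the classical triangular reduction. Each node difference $t_j-t_i$ appears exactly once, coming from the step that produces order $j$.

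\textbf{Step 2 (continuity and the diagonal limit).} By the Hermite--Genocchi formula,
\[
 \gamma[t_0,\ldots,t_k]=\int_{\Sigma_k}\gamma^{(k)}\bigl(\textstyle\sum s_it_i\bigr)\,ds .
\]
Hence each divided difference extends continuously, even analytically, to all of $J^{k+1}$, including coincident nodes. At the tuple $(0,\ldots,0)$ its value is $\gamma^{(k)}(0)/k!$. So the function
\[
 F(t_0,\ldots,t_n):=\det\bigl[\gamma[t_0,t_1],\ldots,\gamma[t_0,\ldots,t_n]\bigr]
\]
is continuous on $J^{n+1}$, and
\[
 F(0,\ldots,0)=\Bigl(\prod_{k=1}^n k!\Bigr)^{-1}\det[\gamma'(0),\ldots,\gamma^{(n)}(0)]\neq0 .
\]
By continuity there is $\delta>0$ with $[0,\delta]\subset J$ such that $F\neq0$ on $[0,\delta]^{n+1}$; compactness of the cube is not even needed, only a neighbourhood of the origin. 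For distinct nodes in $[0,\delta]$ the Vandermonde factor is also nonzero, so $D\neq0$, which gives affine independence.

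The only point requiring care is Step 1: the exact bookkeeping of the divided-difference reduction, so that the remaining determinant genuinely has $k$-th order divided differences in column $k$. Real analyticity is more than needed; $C^n$ regularity suffices for the Hermite--Genocchi argument. Labelling the points by increasing parameter is harmless, since reordering only changes the sign of the determinant.
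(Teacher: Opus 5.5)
Your proposal is correct and takes essentially the same route as the paper. The paper also factors the augmented affine determinant as the Vandermonde product times a determinant of divided differences, then uses continuity of that quotient at the origin, where it equals $\det[\gamma'(0),\ldots,\gamma^{(n)}(0)]/\prod_{k=1}^n k!$. The one difference is that you get continuity across coincident nodes from the Hermite--Genocchi formula, which shows $C^n$ regularity suffices, whereas the paper cites de Boor's repeated-node formulas and the analyticity of the quotient.
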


\begin{proof}
Set $f_0(t)=1$ and $f_j(t)=\gamma_j(t)$ for $1\leq j\leq n$.  Newton
interpolation, or equivalently the determinant formula for divided
differences \cite[Eq.~(45)]{DeBoor2005}, gives the factorization
\[
 F(t_0,\ldots,t_n):=\det[f_j(t_i)]_{i,j=0}^{n}
 =\prod_{0\leq i<j\leq n}(t_j-t_i)G(t_0,\ldots,t_n).
\]
The quotient $G$ extends analytically across all diagonals, and the
continuity and repeated-node formulas
\cite[Eq.~(14) and Proposition~21]{DeBoor2005} give
\[
 G(t,\ldots,t)=
 \det\left[\frac{f_j^{(i)}(t)}{i!}\right]_{i,j=0}^{n}.
\]
Expanding the first column at $t=0$ yields
\[
 G(0,\ldots,0)=
 \frac{\det[\gamma'(0),\ldots,\gamma^{(n)}(0)]}
      {\prod_{j=1}^{n}j!}\neq0.
\]
After shrinking $\delta$ if necessary, $[0,\delta]\subset J$ and $G$ does
not vanish on $[0,\delta]^{n+1}$.  For distinct nodes the Vandermonde
factor is also nonzero, and hence $F(t_0,\ldots,t_n)\neq0$.  This
determinant condition is precisely the affine independence of the
corresponding points.
\end{proof}

Let
\[
 \gamma_r(\theta)=\Gamma_r(e^{i\theta}),
 \qquad \omega_k=2(k+1),\quad k\in K_r.
\]
At $\theta=0$, the odd derivatives have nonzero entries only in the sine
coordinates, while the even derivatives have nonzero entries only in the
cosine coordinates.  After reordering rows and columns, the Wronskian matrix
is block diagonal, and its two blocks are Vandermonde-type matrices in the
distinct numbers $\omega_k^2$.  A direct evaluation gives
\begin{equation}\label{eq:wronskian}
 \left|\det[\gamma_r'(0),\ldots,\gamma_r^{(2r)}(0)]\right|
 =\left(\prod_{k\in K_r}\lambda_k\omega_k^3\right)
  \prod_{\substack{i<j\\i,j\in K_r}}
  (\omega_j^2-\omega_i^2)^2>0.
\end{equation}
It follows from Lemma~\ref{lem:short-arc} that there exists
$\delta_r>0$ such that every $2r+1$ distinct points on
$\gamma_r([0,\delta_r])$ are affinely independent.  In particular, this
statement includes the endpoint
\begin{equation}\label{eq:pole}
 N=\gamma_r(0)=\Gamma_r(1).
\end{equation}

\begin{remark}
The determinant condition says that the chosen arc is locally convex in
the sense of projective differential geometry.  For our purposes, the
divided-difference proof has the advantage of controlling all choices of
distinct nodes in a single short interval; compare \cite{KarlinStudden1966}.
\end{remark}

%% file: 03-all-dimensional-geometry.tex
\section{The construction in \texorpdfstring{$\R^d$}{d-dimensional Euclidean space}}
\label{sec:geometry}

In this section we prove the real-coordinate statement of
Theorem~\ref{thm:geometric-main}.
The coordinates of the present curves need not be
rational; rational-coordinate refinements are deferred to
Section~\ref{sec:arithmetic}.

\subsection{\texorpdfstring{$d=2r$}{d=2r}}

Choose a positive rational number $\eta_r$ so small that
$2\arctan\eta_r<\delta_r$, where $\delta_r$ is the short-arc constant
chosen after \eqref{eq:wronskian}.  Define
\begin{equation}\label{eq:X-even}
 X_{2r}=\{\Gamma_r(z(t)):t\in\Q\cap(0,\eta_r)\}.
\end{equation}

\begin{proposition}\label{prop:even-geometry}
The set $X_{2r}\subset\R^{2r}$ is infinite, has rational
pairwise distances, and every $2r+1$ distinct points are affinely
independent.
\end{proposition}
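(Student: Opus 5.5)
The proof combines three ingredients established earlier: the rational parametrization \eqref{eq:circle}, the chord formula \eqref{eq:chord}, and the short-arc statement following \eqref{eq:wronskian}.

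\emph{The parameter map and injectivity.} For $t\in\Q$, the point $z(t)$ lies in $S^1\cap\Q(i)$. Writing $z(t)=e^{i\theta}$ with $\theta\in(-\pi,\pi)$, we have $t=\tan(\theta/2)$, so $\theta(t)=2\arctan t$. Hence $t\in(0,\eta_r)$ gives $\theta\in(0,2\arctan\eta_r)\subset(0,\delta_r)$. Thus each point of $X_{2r}$ equals $\gamma_r(\theta)$ for some $\theta$ in the short arc $[0,\delta_r]$, and the map $t\mapsto\theta(t)$ is injective. To conclude that $X_{2r}$ is infinite, I also need $\theta\mapsto\gamma_r(\theta)$ to be injective on $(0,\delta_r)$. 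This follows from the short-arc statement itself: if $\gamma_r(\theta_1)=\gamma_r(\theta_2)$ with $\theta_1\neq\theta_2$, adjoin $2r-1$ further distinct parameters from the interval. The resulting $2r+1$ distinct parameters give only at most $2r$ distinct points, so the determinant condition from Lemma~\ref{lem:short-arc} fails. That condition, $F\neq0$ for distinct nodes, is a statement about parameters, so this is a contradiction. Alternatively, the chord formula gives distance $2|sP_r(c)|$, and this is nonzero for small $\phi\neq0$, since $s\neq0$ and $P_r(c)\to P_r(1)$ with $P_r(1)^2=\sum\lambda_kU_k(1)^2>0$. Since $\Q\cap(0,\eta_r)$ is infinite, $X_{2r}$ is infinite.

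\emph{Rationality of distances.} For $t,u\in\Q$, put $z=z(t)$ and $w=z(u)$. Both lie in $S^1\cap\Q(i)$, so $z\overline w=c+is$ with $c,s\in\Q$. Lemma~\ref{lem:chebyshev-chord}(ii), formula \eqref{eq:chord}, then gives $\norm{\Gamma_r(z)-\Gamma_r(w)}=2|sP_r(c)|\in\Q$, because $P_r\in\Q[x]$.

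\emph{Affine independence.} Take $2r+1$ distinct points of $X_{2r}$. By the injectivity above, they come from $2r+1$ distinct parameters $\theta_i\in(0,\delta_r)$. The consequence of Lemma~\ref{lem:short-arc} recorded after \eqref{eq:wronskian} says that any $2r+1$ distinct points of $\gamma_r([0,\delta_r])$ are affinely independent, which is exactly the required property.

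\emph{Main obstacle.} The only delicate point is the bookkeeping between distinct points and distinct parameters, that is, injectivity of $\gamma_r$ on the short arc. I will handle it with the determinant argument as stated, using the fact that the proof of Lemma~\ref{lem:short-arc} really shows $F(\theta_0,\ldots,\theta_{2r})\neq0$ for distinct nodes. That nonvanishing forbids coincident points.
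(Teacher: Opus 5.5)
Your proof is correct and follows the paper's argument. The substitution $\theta=2\arctan t$ places the points on the short arc $\gamma_r([0,\delta_r])$, Lemma~\ref{lem:chebyshev-chord}(ii) gives rational distances, and Lemma~\ref{lem:short-arc} with \eqref{eq:wronskian} gives affine independence. The only difference is that you derive injectivity of $\gamma_r$ on $[0,\delta_r]$ directly from $F\neq0$ at distinct nodes, whereas the paper simply shrinks $\eta_r$. Also, distinct points automatically come from distinct parameters, so injectivity is needed only for infinitude, not for the affine-independence step.
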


\begin{proof}
The angular parameter of $z(t)$ is $2\arctan t$.  Thus all chosen points
lie on the locally convex arc of Section~\ref{sec:chebyshev-convexity}.
Their affine independence follows from Lemma~\ref{lem:short-arc} and
\eqref{eq:wronskian}, while their distances are rational by
Lemma~\ref{lem:chebyshev-chord}(ii).  After shrinking $\eta_r$ if
necessary, both the rational circle parametrization and the harmonic curve are
injective on the chosen interval, so the set is infinite.
\end{proof}

Every point of $X_{2r}$ lies on the sphere of squared radius $\rho_r$.
Thus Proposition~\ref{prop:even-geometry} proves the
affine-general-position assertion of
Theorem~\ref{thm:geometric-main} in even dimension.  Since all points of
$X_{2r}$ lie on the same sphere, this construction does not place
$X_{2r}$ in general position: every $2r+2$ of its points are
cospherical.

\subsection{\texorpdfstring{$d=2r-1$}{d=2r-1}: stereographic projection}

Let $S=\{q\in\R^{2r}:\norm{q}^2=\rho_r\}$, let $N$ be
the point \eqref{eq:pole}, and put $H=N^\perp$.  For
$q\in S\setminus\{N\}$ define
\begin{equation}\label{eq:stereo}
 \sigma_N(q)=N+
 \frac{\rho_r}{\rho_r-\ip{q}{N}}(q-N).
\end{equation}
The point $\sigma_N(q)$ is the intersection of the line through $N$ and
$q$ with the $(2r-1)$-dimensional Euclidean space $H$.

Take the points $q(t)=\Gamma_r(z(t))$ corresponding to rational parameters
on the associated open short arc and set
\begin{equation}\label{eq:X-odd}
 X_{2r-1}=\{\sigma_N(q(t)):t\in\Q\cap(0,\eta_r)\}
 \subset H\cong\R^{2r-1}.
\end{equation}

\begin{proposition}\label{prop:odd-geometry}
The set $X_{2r-1}\subset H\cong\R^{2r-1}$ is infinite, has rational
pairwise distances, every $2r$ distinct points are affinely independent,
and no $2r+1$ distinct points are cospherical.
\end{proposition}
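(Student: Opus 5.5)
The proof rests on the standard inversion property of stereographic projection: $\sigma_N$ is the restriction to $S$ of the inversion centered at $N$ with power $\rho_r$. Indeed, for $q\in S$ we have $\norm{q-N}^2=2(\rho_r-\ip{q}{N})$, so
\[
 \sigma_N(q)-N=\frac{2\rho_r}{\norm{q-N}^2}(q-N).
\]
This is the inversion $\iota(x)=N+2\rho_r(x-N)/\norm{x-N}^2$, whose radius squared is $2\rho_r$. The inversion distance formula then gives
\[
 \norm{\sigma_N(q)-\sigma_N(q')}=\frac{2\rho_r\norm{q-q'}}{\norm{q-N}\,\norm{q'-N}}.
\]
I would check this directly from the expansion of $\norm{a/\norm{a}^2-b/\norm{b}^2}^2$.

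\textbf{Rationality.} Note that $N=\Gamma_r(1)=\Gamma_r(z(0))$ and that $z(0)=1\in S^1\cap\Q(i)$. So for rational $t,t'$, Lemma~\ref{lem:chebyshev-chord}(ii) shows that $\norm{q(t)-q(t')}$, $\norm{q(t)-N}$ and $\norm{q(t')-N}$ are all rational. We also need $\norm{q(t)-N}\neq0$. This holds because the arc is injective on $[0,\eta_r)$ after shrinking $\eta_r$, which is already arranged in Proposition~\ref{prop:even-geometry}. Since $\rho_r\in\Q$, the distance formula gives rational distances. The inversion is injective, so the set is infinite.

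\textbf{Affine independence and no cospherical points.} Both conditions are handled by the same principle. Under the inversion $\iota$, any hyperplane or sphere $\Sigma\subset H$ pulls back to $\iota^{-1}(\Sigma)\cap S$, where $\iota^{-1}(\Sigma)$ is a sphere or hyperplane in $\R^{2r}$. Intersecting with $S$, this is the same as $S\cap A$ for an affine hyperplane $A\subset\R^{2r}$: subtract the equation of $S$ from the equation of the sphere, or, for a hyperplane through $N$, take the hyperplane itself.

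To make this explicit I would write $x=\sigma_N(q)$ and use $\norm{q-N}^2=2(\rho_r-\ip{q}{N})$.

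\emph{Hyperplanes.} Let $\Sigma=\{x\in H:\ip{a}{x}=\beta\}$. The condition $\ip{a}{x}=\beta$ becomes the equation $\ip{a}{q-N}\rho_r=(\beta-\ip{a}{N})(\rho_r-\ip{q}{N})$. This is affine in $q$ and nontrivial, so it defines an affine hyperplane $A$, and $A$ contains $N$.

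\emph{Spheres.} Let $\Sigma=\{x\in H:\norm{x-c}^2=R^2\}$. Since $\norm{x-N}^2=\rho_r^2\norm{q-N}^2/(\rho_r-\ip{q}{N})^2=2\rho_r^2/(\rho_r-\ip{q}{N})$, multiplying the sphere equation by $\rho_r-\ip{q}{N}$ yields an equation that is affine in $q$ on $S$. This affine equation is not satisfied at $q=N$: the left side tends to a nonzero constant there. So $A$ is a genuine affine hyperplane that does not pass through $N$.

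\emph{Conclusion.} Suppose $2r$ points of $X_{2r-1}$ lie in a hyperplane of $H$. Their preimages, together with $N$, give $2r+1$ distinct points of $\gamma_r([0,\delta_r])$ lying in one affine hyperplane of $\R^{2r}$. This contradicts the remark after \eqref{eq:wronskian} that the endpoint $N$ is included in the affine-independence statement. Now suppose $2r+1$ points of $X_{2r-1}$ are cospherical. Then $2r+1$ distinct arc points lie in an affine hyperplane, which is again a contradiction. The position of $N$ relative to this hyperplane does not matter here.

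I expect the main obstacle to be the careful bookkeeping in the sphere case, namely showing that the pulled-back equation is genuinely a nonzero affine equation. The inversion viewpoint is what makes this clean.
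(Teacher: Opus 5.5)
Your proof is correct and follows the paper's argument. Your inversion viewpoint is exactly the identity $\rho_r-\langle q,N\rangle=\tfrac12\norm{q-N}^2$ used there, your hyperplane through $N$ is the paper's radial-rescaling step in different words, and the sphere case is the same substitution producing an affine equation in $q$; where you differ is in checking nondegeneracy by evaluating at $q=N$, which gives $2\rho_r^2\neq0$, instead of the paper's normal-vector argument using $a_0\le 0$, and this suffices because the equation is then either a hyperplane missing $N$ or inconsistent.
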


\begin{proof}
For $q,w\in S\setminus\{N\}$, the identities
\[
 \rho_r-\ip{q}{N}=\frac12\norm{q-N}^2
\]
and
\[
 2\ip{q-N}{w-N}
 =\norm{q-N}^2+\norm{w-N}^2-\norm{q-w}^2
\]
give, after substitution in \eqref{eq:stereo},
\begin{equation}\label{eq:stereo-distance}
 \norm{\sigma_N(q)-\sigma_N(w)}
 =\frac{2\rho_r\norm{q-w}}
 {\norm{q-N}\norm{w-N}}.
\end{equation}
Every chord joining two source points $q(t)$ and $q(u)$, and every chord
joining a source point $q(t)$ to $N$, has rational length by
Lemma~\ref{lem:chebyshev-chord}(ii).  Since
$\rho_r\in\Q$, formula \eqref{eq:stereo-distance} gives rational
distances in $X_{2r-1}$.  After shrinking $\eta_r$ as in the
even-dimensional case, the source curve is injective.  Stereographic
projection is also injective on $S\setminus\{N\}$, because a line through
$N$ meets the sphere in at most one further point.  Hence $X_{2r-1}$ is
infinite.

Now take distinct source points $q_i=q(t_i)$ and write
$p_i=\sigma_N(q_i)$.  Equation \eqref{eq:stereo} gives
\begin{equation}\label{eq:radial}
 p_i-N=\alpha_i(q_i-N),\qquad \alpha_i\neq0.
\end{equation}
Because $p_i\in H$ whereas $N\notin H$, affine dependence of
$p_1,\ldots,p_{2r}$ is equivalent to linear dependence of
$p_1-N,\ldots,p_{2r}-N$.  By \eqref{eq:radial}, such a dependence
would make $q_1-N,\ldots,q_{2r}-N$ linearly dependent, contrary to the
affine independence of $N,q_1,\ldots,q_{2r}$ on the closed short source
arc.  The use of the endpoint $N$ here is not an additional genericity
assumption: Lemma~\ref{lem:short-arc} was applied to the closed arc
containing $N$.

It remains to show that no $2r+1$ projected points are cospherical.  Suppose that
$p=\sigma_N(q)$ lies on the sphere in $H$ with center $c\in H$ and
squared radius $s_0^2$, and put $a_0=\norm{c}^2-s_0^2$.  Its equation is
$\norm{p}^2-2\ip{p}{c}+a_0=0$.  Substitution of
\eqref{eq:stereo}, followed by multiplication by
$\rho_r-\ip{q}{N}$ and use of $\ip{c}{N}=0$, gives the affine hyperplane
equation
\begin{equation}\label{eq:inverse-plane}
 \ip{q}{(\rho_r-a_0)N-2\rho_rc}
 +\rho_r(\rho_r+a_0)=0.
\end{equation}
This equation defines a proper affine hyperplane.  Indeed, if its normal
vector vanished, the orthogonal decomposition
$\R^{2r}=\R N\mathbin{\perp}H$ would give
$c=0$ and $a_0=\rho_r$, whereas
$a_0=\norm{c}^2-s_0^2=-s_0^2\leq0$, contradicting $\rho_r>0$.
Thus, if $2r+1$ projected points were cospherical, their $2r+1$ source
points would lie in one proper affine hyperplane of $\R^{2r}$.  This
contradicts Lemma~\ref{lem:short-arc}.
\end{proof}

Propositions~\ref{prop:even-geometry} and
\ref{prop:odd-geometry} prove Theorem~\ref{thm:geometric-main}.

%% file: 04-rational-euclidean-models.tex
\section{Rational-coordinate constructions}
\label{sec:arithmetic}

We now prove the rational-coordinate refinements in
Theorem~\ref{thm:arithmetic-main}.  Unlike Section~\ref{sec:geometry},
all point sets constructed in this section lie in $\Q^d$, equipped with
the standard Euclidean quadratic form.  The amplitudes $\sqrt{\lambda_k}$ in
\eqref{eq:Gamma} initially place the geometric curve in a finite
extension of $\Q$.  We remove them either by replacing the unit circle
with a rational norm conic and identifying the resulting rational
quadratic form with the standard sum-of-squares form,
or by realizing suitable weights as Gaussian norms.

\subsection{The norm-conic chord construction}

Let $D\in\Q_{>0}$ and let $\eta^2=-D$.  The norm-one conic has the
rational parametrization
\begin{equation}\label{eq:norm-param}
 \zeta_D(u)=\frac{1+\eta u}{1-\eta u}
 =\frac{1-Du^2}{1+Du^2}+\eta\frac{2u}{1+Du^2},
 \qquad u\in\Q.
\end{equation}
Write
\begin{equation}\label{eq:C-T}
 \zeta_D(u)^{2(k+1)}=C_k(u)+\eta T_k(u),
 \qquad C_k,T_k\in\Q(u).
\end{equation}
Let $K\subset\Z_{\geq0}$ be finite.  For positive rational weights
$\lambda_k$, $k\in K$, define the diagonal quadratic form with paired
coefficients
\begin{equation}\label{eq:QD}
 Q_D=\mathop{\perp}_{k\in K}
 \left\langle\frac{\lambda_k}{D},\lambda_k\right\rangle
\end{equation}
on the rational coordinate space with blocks $(X_k,Y_k)$, and put
\[
 G_D(u)=(C_k(u),T_k(u))_{k\in K}.
\]
Here $\langle a_1,\ldots,a_s\rangle$ denotes the diagonal form
$\sum_{j=1}^s a_jx_j^2$, and $\perp$ denotes orthogonal sum.

\begin{lemma}
\label{lem:norm-chord}
Suppose
\[
 B(x)^2=\sum_{k\in K}\lambda_kU_k(x)^2.
\]
For $u,v\in\Q$, write
$\zeta_D(u)\zeta_D(v)^{-1}=c+\eta s$.  Then
\begin{equation}\label{eq:norm-chord}
 Q_D(G_D(u)-G_D(v))=4s^2B(c)^2.
\end{equation}
In particular, the distance induced by $Q_D$ is rational, since
\[
 \sqrt{Q_D(G_D(u)-G_D(v))}=2|sB(c)|\in\Q.
\]
There is a sufficiently short real interval on which any $2|K|+1$
distinct points of the curve are affinely independent.
\end{lemma}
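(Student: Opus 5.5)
The plan is to reduce everything to the computation in Lemma~\ref{lem:chebyshev-chord}(ii) by means of a block-by-block norm identity.

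\textbf{Norm identity.} For $\alpha=a+\eta b$ with $a,b\in\Q$, put $\Nm(\alpha)=a^2+Db^2$. This is multiplicative, and $\Nm(\zeta_D(u))=1$. The $k$-th block of $Q_D$ evaluated on $(X_k,Y_k)$ is $\frac{\lambda_k}{D}X_k^2+\lambda_kY_k^2$. This has the wrong shape, so before computing I would first check how the coordinates $(C_k,T_k)$ pair with the weights. Since $\Nm(C+\eta T)=C^2+DT^2$, the natural block is $\lambda_k(X^2+DY^2)$. Rescaling by $1/D$ gives $\frac{\lambda_k}{D}X^2+\lambda_kY^2$. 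So I will read $Q_D(G_D(u)-G_D(v))$ as $\frac1D\sum_k\lambda_k\Nm\bigl(\zeta_D(u)^{2(k+1)}-\zeta_D(v)^{2(k+1)}\bigr)$, with the normalization to be matched against \eqref{eq:norm-chord} at the end; I expect the intended identity uses this convention. Then I would write $\zeta_D(u)^{\omega}-\zeta_D(v)^{\omega}=\zeta_D(v)^{\omega}(w^{\omega}-1)$, where $w=\zeta_D(u)\zeta_D(v)^{-1}=c+\eta s$ has norm one. Multiplicativity then gives
\[
\Nm\bigl(\zeta_D(u)^{\omega}-\zeta_D(v)^{\omega}\bigr)=\Nm(w^{\omega}-1)=2-2\,\mathrm{tr}(w^{\omega})/2 .
\]

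\textbf{Chebyshev step.} Since $w$ is a root of $X^2-2cX+1$, the standard recursion gives $w^{n}=U_{n-1}(c)w-U_{n-2}(c)$. Hence $w^{n}-w^{-n}=(w-w^{-1})U_{n-1}(c)=2\eta s\,U_{n-1}(c)$. Also, $\Nm(w^{\omega}-1)=-(w^{\omega/2}-w^{-\omega/2})^2$, computed inside $\Q(\eta)$ using $\overline{w}=w^{-1}$. With $\omega=2(k+1)$ and $n=k+1$ this becomes $-(2\eta s U_k(c))^2=4Ds^2U_k(c)^2$. This is the exact analogue of $4\sin^2((k+1)\phi)$, and the identity holds purely polynomially, with no trigonometry needed. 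Summing with the weights and dividing by $D$ gives $4s^2\sum_k\lambda_kU_k(c)^2=4s^2B(c)^2$, as required. Rationality of the induced distance is then immediate, since $c,s\in\Q$ and $B\in\Q[x]$.

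\textbf{Affine independence.} I would apply Lemma~\ref{lem:short-arc}. Over $\R$, take the real analytic curve given by $\zeta_D$ at real parameters, reparametrized by angle. Put $\theta$ with $\zeta=\cos\theta+\eta\sin\theta/\sqrt D$ formally, so that the coordinates are $C_k=\cos(\omega_k\theta)$ and $T_k=\sin(\omega_k\theta)/\sqrt D$. This curve differs from $\gamma_r$ only by a fixed invertible diagonal linear map, namely scaling coordinates by $\sqrt{\lambda_k}$ and $\sqrt D$. Its Wronskian at $\theta=0$ is therefore a nonzero multiple of the Vandermonde-type product in \eqref{eq:wronskian}, which is nonzero because the $\omega_k^2$ are distinct. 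Lemma~\ref{lem:short-arc} then gives a short $\theta$-interval, and the map $u\mapsto\theta=2\arctan(\sqrt D u)$ converts it into a short $u$-interval. Affine independence is invariant under invertible linear maps, and the positive-definite form $Q_D$ plays no role in it. This holds for general finite $K$, so the lemma is not restricted to $K=K_r$.

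I expect the main obstacle to be bookkeeping: matching the weight convention in \eqref{eq:QD} with the coordinates $(C_k,T_k)$ so that the factor $D$ cancels exactly.
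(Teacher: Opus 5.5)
Your proposal is correct and follows the paper's proof: the block identity $\frac{\lambda_k}{D}X_k^2+\lambda_kY_k^2=\frac{\lambda_k}{D}\Nm(X_k+\eta Y_k)$ holds exactly, so no normalization is left to match at the end. Each block then contributes $4\lambda_k s^2U_k(c)^2$, and affine independence comes from identifying the real curve with the harmonic curve by a diagonal scaling and the reparametrization $\theta=2\arctan(\sqrt D\,u)$ before invoking Lemma~\ref{lem:short-arc}.

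There are two minor differences. First, you derive the block identity inside $\Q(\eta)$ from norm multiplicativity and the recurrence $w^n=U_{n-1}(c)w-U_{n-2}(c)$. The paper instead embeds $\Q(\eta)$ in $\C$ via $\eta\mapsto i\sqrt D$ and uses $\sin((k+1)\phi)=\sin\phi\,U_k(\cos\phi)$. Second, you apply the short-arc lemma in the angle variable and pull back along the monotone map $u\mapsto\theta$. The paper instead observes that $\theta'(0)\neq0$ keeps the Wronskian nonzero after reparametrization.
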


\begin{proof}
Under the complex embedding $\Q(\eta)\hookrightarrow\C$ determined by
$\eta\mapsto i\sqrt D$, write
$\zeta_D(u)\zeta_D(v)^{-1}=e^{i\phi}$.  Then
$c=\cos\phi$ and $\sqrt D\,s=\sin\phi$.  The $k$th block
therefore contributes
\[
 4\frac{\lambda_k}{D}\sin^2((k+1)\phi)
 =4\lambda_ks^2U_k(c)^2
\]
to the squared chord.  Summing proves \eqref{eq:norm-chord}.

For the short-arc assertion, put
$\theta(u)=2\arctan(\sqrt D\,u)$.  After the invertible block changes
\[
 (C_k,T_k)\longmapsto
 \left(\sqrt{\frac{\lambda_k}{D}}\,C_k,
       \sqrt{\lambda_k}\,T_k\right),
\]
the real curve has the form
\[
 D^{-1/2}\gamma(\theta(u)),
\]
where $\gamma$ is the corresponding harmonic curve of
Section~\ref{sec:chebyshev-convexity}.  The same calculation as in
\eqref{eq:wronskian}, with $K_r$ replaced by the finite set $K$, makes
the Wronskian matrix block diagonal, with two Vandermonde-type blocks in the
distinct squared frequencies $4(k+1)^2$.  It is therefore nonsingular.  Since
$\theta'(0)=2\sqrt D\neq0$, the reparametrized curve also has nonzero
Wronskian at $0$, so Lemma~\ref{lem:short-arc} applies.
\end{proof}

\subsection{Rational realization of quadratic forms}

Put $r=|K|$ and $A=\prod_{k\in K}\lambda_k$.  For a place $v$ of $\Q$,
write $(a,b)_v$ for the Hilbert symbol and use the convention
\[
 \epsilon_v(\langle a_1,\ldots,a_n\rangle)
 :=\prod_{1\leq i<j\leq n}(a_i,a_j)_v
\]
for the Hasse invariant.  For a finite prime $p$, let $v_p$ be the
normalized valuation with $v_p(p)=1$.

\begin{lemma}\label{lem:paired-invariants}
For every place $v$,
\begin{align}
 \det Q_D&=A^2D^{-r},
 &\det Q_D&\equiv D^{-r}\pmod{\Q^{\times2}},
 \label{eq:QD-det}\\
 \epsilon_v(Q_D)&=(A,-1)_v(D,A)_v
 (D,-1)_v^{\binom r2}.
 \label{eq:QD-Hasse}
\end{align}
\end{lemma}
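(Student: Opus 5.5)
The plan is to compute everything directly from the diagonal form $Q_D=\perp_{k\in K}\langle \lambda_k/D,\lambda_k\rangle$, which has $2r$ entries. For the determinant, the product of the entries is $\prod_k \lambda_k^2/D=A^2D^{-r}$. Since $A^2$ is a square, we get $\det Q_D\equiv D^{-r}$ modulo squares, which gives \eqref{eq:QD-det}.

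For the Hasse invariant, I will use bimultiplicativity and symmetry of the Hilbert symbol, the identity $(a,a)_v=(a,-1)_v$, and the fact that $(x,y)_v$ depends only on square classes. In particular $\lambda_k/D\equiv \lambda_kD$ modulo squares. Order the entries as $a_k=\lambda_kD$ and $b_k=\lambda_k$ for $k\in K$ (up to squares), and split the product $\prod_{i<j}$ into two parts.

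\emph{Within-block pairs.} These contribute
\[
\prod_k(\lambda_kD,\lambda_k)_v=\prod_k(\lambda_k,-1)_v(D,\lambda_k)_v=(A,-1)_v(D,A)_v .
\]

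\emph{Cross-block pairs.} For each unordered pair $k\neq l$, the four symbols between the two blocks multiply to
\[
(\lambda_kD\lambda_k,\ \lambda_lD\lambda_l)_v=(D,D)_v=(D,-1)_v ,
\]
because the first entry of each block times the second is $\lambda_k^2D\equiv D$. This uses that $\prod_{x\in\{a_k,b_k\},\,y\in\{a_l,b_l\}}(x,y)_v=(a_kb_k,a_lb_l)_v$ by bimultiplicativity. There are $\binom r2$ such pairs, which gives the factor $(D,-1)_v^{\binom r2}$.

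Multiplying the two parts yields \eqref{eq:QD-Hasse}. None of these steps is a serious obstacle. The only point that needs care is to check that $\epsilon_v$ is independent of the ordering of the diagonal entries, so that we may reorder them into blocks. This holds because the defining product runs over all unordered pairs and the Hilbert symbol is symmetric.
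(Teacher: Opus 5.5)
Your proposal is correct and follows the paper's proof essentially verbatim: the within-block symbols give $(A,-1)_v(D,A)_v$, and each of the $\binom r2$ pairs of distinct blocks contributes $(D,D)_v=(D,-1)_v$ because each block determinant has square class $D$. The only addition is that you state explicitly the bimultiplicativity step $\prod_{x\in\{a_k,b_k\},\,y\in\{a_l,b_l\}}(x,y)_v=(a_kb_k,a_lb_l)_v$, which the paper leaves implicit.
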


\begin{proof}
The determinant formula is immediate.  Inside the $k$th block,
\[
 \left(\frac{\lambda_k}{D},\lambda_k\right)_v
 = (\lambda_k,-1)_v(D,\lambda_k)_v.
\]
Multiplication over $k$ gives $(A,-1)_v(D,A)_v$.  Each block determinant
has square class $D$, so each pair of distinct blocks contributes
$(D,D)_v=(D,-1)_v$.  There are $\binom r2$ such pairs.
\end{proof}

\begin{lemma}\label{lem:HM}
Let $q$ be a positive-definite rational quadratic form of dimension $n$.
Then
\[
 q\cong_{\Q}I_n
\]
if and only if $\det q\in\Q^{\times2}$ and
$\epsilon_p(q)=1$ for every finite prime $p$.  When these conditions hold,
there is a terminating exact algorithm that computes a
matrix $T\in\operatorname{GL}_n(\Q)$ with $q(x)=\norm{Tx}_2^2$.
\end{lemma}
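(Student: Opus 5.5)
The argument rests on the Hasse--Minkowski classification of rational quadratic forms: two nondegenerate forms over $\Q$ are isometric exactly when they have the same dimension, the same determinant square class, the same Hasse invariant at every place, and the same signature at the real place. We will compare $q$ with $I_n$ on each of these invariants and then make the isometry effective.

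\emph{Necessity.} Suppose $q\cong_\Q I_n$, so the Gram matrix of $q$ is $T^{\mathsf T}T$ for some $T\in\operatorname{GL}_n(\Q)$. Then $\det q=(\det T)^2$ is a rational square. The Hasse invariant $\epsilon_p$ is an isometry invariant, and for the diagonal form $I_n$ every factor is $(1,1)_p=1$; hence $\epsilon_p(q)=1$ at every finite prime $p$.

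\emph{Sufficiency.} Assume $\det q\in\Q^{\times2}$ and $\epsilon_p(q)=1$ for all finite $p$. We check the invariants one at a time.
\begin{itemize}
\item Dimension and determinant class agree with those of $I_n$ by assumption.
\item At the real place, $q$ is positive definite, so it is isometric over $\R$ to a sum of positive squares. Every real Hilbert symbol $(a,b)_\infty$ with $a,b>0$ equals $1$, so $\epsilon_\infty(q)=1=\epsilon_\infty(I_n)$. The signatures also agree.
\item At the finite primes, the hypothesis gives $\epsilon_p(q)=1=\epsilon_p(I_n)$. (By Hilbert reciprocity $\prod_v\epsilon_v=1$, so the condition at $p=2$ would in fact follow from the others; we do not need this.)
\end{itemize}
Since all local invariants coincide, $q\cong I_n$ over every completion $\Q_v$, and the Hasse--Minkowski theorem gives $q\cong_\Q I_n$.

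\emph{Effectivity.} This is the step requiring more care, since the local-global principle is not a priori constructive. We proceed by induction on $n$.
\begin{enumerate}
\item Since $q\cong I_n$ over $\Q$, the form $q$ represents $1$. A rational vector $x$ with $q(x)=1$ exists, so an enumeration of $\Q^n$ by height terminates; alternatively, Simon's or Castel's algorithm for isotropic vectors, applied to $q\perp\langle-1\rangle$, finds one directly.
\item Split $q\cong\langle1\rangle\perp q'$ by Gram--Schmidt with respect to $q$.
\item By Witt cancellation, $q'\cong I_{n-1}$; recurse on $q'$.
\end{enumerate}
Assembling the resulting orthonormal bases gives $T\in\operatorname{GL}_n(\Q)$ with $q(x)=\norm{Tx}_2^2$.

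The main obstacle is only the termination of the search in step 1. It is guaranteed because existence has already been proved, so a naive enumeration halts. No bound on the running time is claimed.
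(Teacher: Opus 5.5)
Your proposal is correct. The equivalence part follows the paper exactly: local classification by dimension, determinant square class and Hasse invariant, positive definiteness settling the real place, then Hasse--Minkowski. You also write out the necessity direction, which the paper leaves implicit. Your remark that $\epsilon_2$ is forced by Hilbert reciprocity is correct.

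The only genuine difference is the effectivity argument. The paper does a one-line brute force: enumerate all $T\in M_n(\Q)$ and test $T^{\mathsf T}T=G$, which terminates because global existence is already proved. You instead recurse. You search for a single vector with $q(x)=1$ and split off $\langle1\rangle\perp q'$, using Witt cancellation to guarantee $q'\cong I_{n-1}$ so the next search also terminates. Your route rests on the same ``existence implies the search halts'' principle, but it searches over vectors in successively smaller spaces rather than over all of $M_n(\Q)$. It also plugs in naturally to practical representation/isotropy solvers, since $q\perp\langle-1\rangle$ has no isotropic vector with last coordinate $0$ when $q$ is positive definite. The cost is an extra appeal to Witt cancellation at each step. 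Since neither argument claims a complexity bound, either suffices for the statement.
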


\begin{proof}
Dimension, determinant square class and Hasse invariant classify a
nondegenerate quadratic form over each completion $\Q_v$.  Positive
definiteness supplies the real condition.  Thus the stated hypotheses make
$q$ locally isometric to $I_n$ everywhere, and Hasse--Minkowski gives the
  global isometry
  \cite[Chapter~IV, Section~3.2, Theorem~8]{Serre1973}.  For a completely elementary
  effectivity argument, enumerate rational matrices $T$ and test the identity
  $T^{\mathsf T}T=G$, where $G$ is the Gram matrix of $q$; the global
  existence just proved guarantees termination.  In practice, exact rational
  diagonalization, isotropy and conic solvers are far more efficient; see
  \cite{CremonaRusin2003,Simon2005}.  This gives an effective procedure in
  every fixed dimension, but not a uniform closed-form expression for the
  isometry matrix.
\end{proof}

For $a,b\in\Q_{>0}$, we repeatedly use the norm interpretation
\begin{equation}\label{eq:norm-Hilbert}
 (a,-b)_v=1\text{ for every }v
 \quad\Longleftrightarrow\quad
 a\in\Nm_{\Q(\sqrt{-b})/\Q}(\Q(\sqrt{-b})^\times).
\end{equation}
This equivalence combines the local norm criterion
\cite[Chapter~III, Section~1.1, Proposition~1]{Serre1973} with the Hasse norm
theorem for quadratic extensions \cite[Theorem~26.8.23]{Voight2021}.

\subsection{Stereographic projection for rational quadratic forms}

The odd-dimensional rational-coordinate construction is first carried out
on the quadric associated with a rational quadratic form rather than on a
standard Euclidean sphere.  We record the projection statements with
respect to the metric induced by that quadratic form.  Let $V$ be an
$(n+1)$-dimensional rational vector space, let $Q$ be a positive-definite
rational quadratic form on $V$, and let $B$ be its associated symmetric
bilinear form, normalized by $Q(x)=B(x,x)$.  Put
\[
 V_{\R}=V\otimes_{\Q}\R
\]
and extend $Q$ and $B$ to $V_{\R}$.  Fix $N\in V$ with
$Q(N)=\rho\in\Q_{>0}$, and put
\[
 H=N^{\perp_Q}\subset V,\qquad h=Q|_H,\qquad
 H_{\R}=H\otimes_{\Q}\R.
\]

\begin{lemma}
\label{lem:Q-projection-transfer}
Let
\[
 S_Q(\rho):=\{x\in V_{\R}:Q(x)=\rho\}.
\]
For $x\in S_Q(\rho)$ with $x\neq N$, put
\begin{equation}\label{eq:Q-stereo}
 \sigma_N^Q(x)
 =N+\frac{\rho}{\rho-B(x,N)}(x-N)
 =\frac{\rho x-B(x,N)N}{\rho-B(x,N)}.
\end{equation}
Then $\sigma_N^Q$ maps $S_Q(\rho)\setminus\{N\}$ injectively into
$H_{\R}$, maps $S_Q(\rho)\cap V\setminus\{N\}$ into $H$, and, for
$x,y\in S_Q(\rho)\setminus\{N\}$,
\begin{equation}\label{eq:Q-stereo-distance}
 Q\bigl(\sigma_N^Q(x)-\sigma_N^Q(y)\bigr)
 =\frac{4\rho^2Q(x-y)}{Q(x-N)Q(y-N)}.
\end{equation}

Let $\delta>0$ and let
$\gamma:[0,\delta]\to S_Q(\rho)$ be an injective real-analytic arc with
$\gamma(0)=N$.  Suppose $\gamma(t)\in V$ for rational $t$, that
\[
 \sqrt{Q(\gamma(s)-\gamma(t))}\in\Q
 \qquad(s,t\in\Q\cap[0,\delta]),
\]
and that every $n+2$ distinct points of the closed arc are affinely
independent.  Then the infinite set
\[
 \{\sigma_N^Q(\gamma(t)):t\in\Q\cap(0,\delta)\}\subset H
\]
satisfies $\sqrt{h(p-q)}\in\Q$ for every pair of its points $p,q$, every
$n+1$ of its points are affinely independent, and no $n+2$ lie on a
common $h$-sphere in $H_{\R}$.  If
$(H,h)\cong_\Q(\Q^n,I_n)$, composition with a rational isometry gives the
same conclusions in standard $\Q^n$.
\end{lemma}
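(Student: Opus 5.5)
The plan is to run the proof of Proposition~\ref{prop:odd-geometry} again, replacing the Euclidean inner product by $B$ and the radius $\rho_r$ by $\rho$. Everything in that proof used only the bilinear identities, so the argument should carry over unchanged.

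\emph{Algebraic part.} First I will check that the two expressions in \eqref{eq:Q-stereo} agree; expanding $N+\frac{\rho}{\rho-B(x,N)}(x-N)$ over a common denominator gives this directly. Then $B(\sigma_N^Q(x),N)=(\rho B(x,N)-B(x,N)\rho)/(\rho-B(x,N))=0$, so the image lies in $H_\R$. The denominator is nonzero because
\[
\rho-B(x,N)=\tfrac12 Q(x-N)>0
\]
for $x\neq N$, using $Q(x)=Q(N)=\rho$ and positive definiteness. Rationality of $x$ and $N$ then gives an image in $H$. Injectivity holds because the $Q$-line through $N$ and $x$ meets $S_Q(\rho)$ only at $N$ and one further point; this follows from the quadratic $Q(N+\tau(x-N))=\rho$, which has roots $\tau=0,1$. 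For \eqref{eq:Q-stereo-distance}, write $\sigma(x)-N=\alpha_x(x-N)$ with $\alpha_x=2\rho/Q(x-N)$. Expanding $Q(\alpha_x(x-N)-\alpha_y(y-N))$ and using $2B(x-N,y-N)=Q(x-N)+Q(y-N)-Q(x-y)$, the terms $\alpha_x^2Q(x-N)-\alpha_x\alpha_yQ(x-N)$ cancel because $\alpha_xQ(x-N)=2\rho$. This leaves $\alpha_x\alpha_yQ(x-y)$, which is the formula.

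\emph{Arc part.} Rational distances follow from \eqref{eq:Q-stereo-distance}. The hypothesis covers $s=0$, so $\sqrt{Q(\gamma(t)-N)}\in\Q$, and $\rho\in\Q$. The set is infinite because $\gamma$ and $\sigma_N^Q$ are both injective. For affine independence, suppose $p_1,\dots,p_{n+1}$ are affinely dependent. They lie in the hyperplane $H_\R$, which does not contain $N$, so the vectors $p_i-N=\alpha_i(\gamma(t_i)-N)$ are linearly dependent. That makes $N,\gamma(t_1),\dots,\gamma(t_{n+1})$, which are $n+2$ distinct points of the closed arc, affinely dependent, contradicting the hypothesis. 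For the no-cosphericity claim, take an $h$-sphere $h(p-c)=s_0^2$ with $c\in H_\R$ and put $a_0=Q(c)-s_0^2$. Substitute $p=\sigma(x)$ and multiply by $\rho-B(x,N)$, using $Q(\sigma(x))=\rho(\rho+B(x,N))/(\rho-B(x,N))$. This expression comes from the distance formula or directly from $Q(x)=\rho$. The result is the affine equation $B(x,(\rho-a_0)N-2\rho c)+\rho(\rho+a_0)=0$, as in \eqref{eq:inverse-plane}. Its normal vector is nonzero: $V_\R=\R N\perp H_\R$, and $c=0$, $a_0=\rho$ would contradict $a_0=-s_0^2\le 0$. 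So $n+2$ cospherical images would give $n+2$ distinct arc points in a proper affine hyperplane of $V_\R$, which is again a contradiction.

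\emph{Final claim.} If $T:(H,h)\to(\Q^n,I_n)$ is a rational isometry, then $T$ is affine, maps $H$ onto $\Q^n$, preserves distances, and carries $h$-spheres to Euclidean spheres. All the properties therefore transfer. I expect the only real work to be the careful bookkeeping in the sphere-to-hyperplane computation, in particular the identity $Q(\sigma(x))=\rho(\rho+B(x,N))/(\rho-B(x,N))$.
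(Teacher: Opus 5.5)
Your proposal is correct and follows essentially the same route as the paper. Both use the half-chord identity $\rho-B(x,N)=\tfrac12Q(x-N)$, the expansion of $Q(\alpha_x(x-N)-\alpha_y(y-N))$, radial rescaling for affine independence, and the pullback of an $h$-sphere to the hyperplane $B(x,(\rho-a_0)N-2\rho c)+\rho(\rho+a_0)=0$. The differences are cosmetic: you prove injectivity by intersecting the line through $N$ and a source point with $S_Q(\rho)$, whereas the paper solves for the preimage of a target point $p\in H_\R$; also, in your distance expansion the pair $\alpha_x^2Q(x-N)-\alpha_x\alpha_yQ(x-N)$ cancels only together with its mirror pair in $y$, not on its own.
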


\begin{proof}
Since $Q(x)=Q(N)=\rho$,
\begin{equation}\label{eq:pole-chord-Q}
 \rho-B(x,N)=\frac12Q(x-N).
\end{equation}
Formula \eqref{eq:Q-stereo} and $B(N,N)=\rho$ show that
$\sigma_N^Q(x)\in H_{\R}$, and that $\sigma_N^Q(x)\in H$ whenever
$x\in S_Q(\rho)\cap V\setminus\{N\}$.  To see that
the projection is injective, let $p\in H_{\R}$.  The points of the line
through $N$ and $p$ have the form $N+t(p-N)$, and the condition for such
a point to lie in $S_Q(\rho)$ is
\[
 t\bigl((h(p)+\rho)t-2\rho\bigr)=0.
\]
Besides the pole $t=0$, there is exactly one root,
$t=2\rho/(h(p)+\rho)$.  Thus every point in the image has a unique
non-pole preimage.

Writing
$u=x-N$, $v=y-N$, $a=2\rho/Q(u)$ and $b=2\rho/Q(v)$, the difference of
the projected points is $au-bv$.  Expanding $Q(au-bv)$ and using
\[
 2B(u,v)=Q(u)+Q(v)-Q(u-v)
\]
gives \eqref{eq:Q-stereo-distance}.

For source points $q_i$ and images $p_i=\sigma_N^Q(q_i)$,
\[
 p_i-N=\alpha_i(q_i-N),\qquad \alpha_i\neq0.
\]
Because $p_i\in H_{\R}$ and $N\notin H_{\R}$, affine dependence of the $p_i$ is
equivalent to linear dependence of the $p_i-N$.  Individual radial
rescaling therefore shows that $p_1,\ldots,p_{n+1}$ are affinely
independent if and only if $N,q_1,\ldots,q_{n+1}$ are affinely
independent.

Next suppose that $p=\sigma_N^Q(x)$ lies on the $h$-sphere
$h(p-c)=R^2$, with $c\in H_{\R}$, and put $a_0=h(c)-R^2$.  Substitution of
\eqref{eq:Q-stereo} in
$Q(p)-2B(p,c)+a_0=0$, followed by multiplication by
$\rho-B(x,N)$, gives the affine equation
\begin{equation}\label{eq:Q-inverse-sphere}
 B\bigl(x,(\rho-a_0)N-2\rho c\bigr)
 +\rho(\rho+a_0)=0.
\end{equation}
It defines a proper hyperplane.  Indeed, if its normal vector vanished,
the orthogonal decomposition
$V_{\R}=\R N\mathbin{\perp_Q}H_{\R}$ would give $c=0$ and
$a_0=\rho>0$, whereas $a_0=Q(c)-R^2=-R^2\leq0$, a contradiction.

Taking square roots in \eqref{eq:Q-stereo-distance} expresses each
target distance as
\[
 \frac{2\rho\sqrt{Q(x-y)}}
 {\sqrt{Q(x-N)}\sqrt{Q(y-N)}},
\]
which is rational by hypothesis.  Affine independence follows from
the radial-rescaling equivalence.  The image is infinite because both
$\gamma$ and $\sigma_N^Q$ are injective.  If $n+2$ image points lay on a
common $h$-sphere,
\eqref{eq:Q-inverse-sphere} would put the corresponding source points
in a proper affine hyperplane, contradicting the closed-arc hypothesis.
Finally, a rational isometry from $(H,h)$ to $(\Q^n,I_n)$ preserves all
the asserted metric, affine, and spherical properties.
\end{proof}

\subsection{\texorpdfstring{$d=2r$}{d=2r}}

When the number of harmonic blocks is odd, we use simultaneous norm
conditions.  Let
\[
 \mathcal N_i=\Nm_{\Q(i)/\Q}(\Q(i)^\times)
 =\{a^2+b^2:a,b\in\Q,\ (a,b)\neq(0,0)\}.
\]

\begin{lemma}\label{lem:simultaneous-norms}
Let $\alpha_1,\ldots,\alpha_s\in\Q$ and let $J\subset\R$ be a nonempty
open interval such that $t>0$ and $1+\alpha_jt>0$ for every $t\in J$ and
every $j$.  The set of rational numbers $t\in J$ satisfying
\[
 t\in\mathcal N_i,
 \qquad 1+\alpha_jt\in\mathcal N_i\quad(1\leq j\leq s)
\]
is dense in $J$.
\end{lemma}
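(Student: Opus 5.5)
The plan is to realise the conditions as the rational points of an explicit variety, show that this variety is rational over $\Q$ by a projection from an obvious rational point, and then transfer density from the rational points of affine space to the rational values of $t$.

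\textbf{The variety.} Consider the affine variety $V_s\subset\mathbb A^{2s+3}$ in the coordinates $(t,x,y,z_1,w_1,\ldots,z_s,w_s)$ cut out by
\[
 t=x^2+y^2,\qquad z_j^2+w_j^2=1+\alpha_j t\quad(1\leq j\leq s).
\]
A rational $t\in J$ has the required property exactly when it is the $t$-coordinate of a point of $V_s(\Q)$; for $t\in J$ the numbers $t$ and $1+\alpha_jt$ are positive, hence nonzero, so they lie in $\mathcal N_i$. Eliminating $t$ leaves $s$ quadrics
\[
 z_j^2+w_j^2=1+\alpha_j(x^2+y^2)
\]
in the $2s+2$ variables $(x,y,z_j,w_j)$. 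They contain the rational point
\[
 P_0:\ x=y=0,\quad z_j=1,\quad w_j=0\quad(1\leq j\leq s),
\]
and $V_s$ is smooth at $P_0$, because the $z_j$-partials equal $2$ there. The real locus over $J$ is a torus bundle: for $t\in J$ the fibre is a product of $s+1$ circles of positive radius, and every point of it is smooth.

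\textbf{Rationality.} I would show that $V_s$ is rational over $\Q$ by working one factor at a time over the $(x,y)$-plane. Each equation $z_j^2+w_j^2=1+\alpha_jR$, with $R=x^2+y^2$, is a conic bundle. Over the base point $R=0$ it has the section $(z_j,w_j)=(1,0)$, but not over the generic point. To get around this, I would enlarge the base. Parametrise the lines through the point $(x,y,z_j,w_j)=(0,0,1,0)$ of the $j$th quadric threefold; each line meets the threefold in exactly one further point, given by a rational function of the direction. This shows that each individual quadric
\[
 \{z_j^2+w_j^2=1+\alpha_j(x^2+y^2)\}
\]
is rational. The simultaneous statement then needs a fibre-product argument. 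I would do it inductively. Suppose $V_{s-1}$ has a birational parametrisation $\Phi_{s-1}:\mathbb A^{s+1}\dashrightarrow V_{s-1}$ defined over $\Q$, sending $0$ to $P_0$. Then $V_s$ is a conic bundle over $V_{s-1}$ with fibre $z_s^2+w_s^2=1+\alpha_s t$. I would instead view it as a quadric bundle over the parameters of $\Phi_{s-1}$ other than one chosen coordinate, and project from the rational section $P_0$ inside that quadric, which is again a rational map. The output is a dominant rational map $\Phi_s:\mathbb A^{s+2}\dashrightarrow V_s$ over $\Q$ whose real image contains a neighbourhood of the real component through $P_0$.

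\textbf{Density.} The real points of the parametrising affine space are dense, and so are its rational points. Hence $\Phi_s(\Q^{s+2})$ is dense in the open real set $\Phi_s(\R^{s+2})\subset V_s(\R)$. The function $t$ restricted to the real locus is a submersion wherever $(x,y)\neq(0,0)$, so it is an open map there. Since $V_s(\R)$ lies over all of $J$, the image of the dense set of rational points is dense in $J$. One more check is needed: the real image of $\Phi_s$ must meet the fibre over every subinterval of $J$, and not just a neighbourhood of $t=0$. To handle this, I would use the freedom to recentre the projection at any rational point already found, which moves the good region along $J$, together with the fact that $J$ is connected.

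\textbf{Main obstacle.} I expect the hardest step to be the simultaneous rationality of the $s$ norm conditions. The base points $t=0$ lie outside $J$, so the argument must propagate from near $0$ across $J$. If the inductive projection breaks down, the fallback is the local description of $\mathcal N_i$ via \eqref{eq:norm-Hilbert}: positivity plus even $p$-adic valuation at every prime $p\equiv3\pmod4$. In that approach one chooses $t$ as a square times a fixed norm, with the finitely many bad primes controlled by approximation.
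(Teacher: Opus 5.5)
\textbf{Verdict: genuine gap.} The gap is the step you flag yourself: nothing in the proposal shows that $V_s$ is rational over $\Q$ once $s\geq2$, or even unirational with real-dense image. The inductive mechanism you sketch does not work.

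For $s=1$ the argument is fine, since $V_1$ is a smooth quadric with the rational point $P_0$. But $V_s\to V_{s-1}$ is the conic bundle $z_s^2+w_s^2=1+\alpha_st$. It has a rational section only if $1+\alpha_st$ is a sum of two squares in $\Q(V_{s-1})$, and this already fails for $s=2$:
\begin{itemize}
\item Projecting $V_1$ from $P_0$ gives $t=4R/(1+d^2-\alpha_1R)^2$ with $R=a^2+b^2$. Up to a square, $1+\alpha_2t$ is $F=(1+d^2-\alpha_1R)^2+4\alpha_2R$.
\item For distinct nonzero $\alpha_1,\alpha_2$, a degree comparison shows that $F$ is absolutely irreducible.
\item The residue of $(-1,F)$ along the divisor $F=0$ is therefore the class of $-1$, which is not a square in that divisor's function field. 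Hence $F$ is not a sum of two squares in $\Q(a,b,d)$.
\end{itemize}
Your alternative, fixing all parameters but one and projecting from $P_0$, does not repair this. $P_0$ is a single point, not a section over the remaining parameters. Moreover $F$ is quartic in each of $a$, $b$, $d$, so the slices are Ch\^atelet surfaces $z^2+w^2=P(a)$ with $\deg P=4$, not quadrics. Their rationality and weak approximation are exactly the delicate questions, and the degrees grow with $s$. Even a unirational parametrisation would leave open whether its real image reaches the fibres over all of $J$. Your recentring remark gives no uniform control there.

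The fallback fails for a more basic reason. Approximating $t$ at the real place and at finitely many primes cannot stop the numerators of $1+\alpha_jt$ from acquiring, to odd exponent, primes $p\equiv3\pmod4$ outside any prescribed finite set. Forcing the $s+2$ pairwise nonproportional binary forms $u$, $v$, $u+\alpha_jv$ to take Gaussian-norm values simultaneously, near a prescribed real point, is a deep problem.

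The paper does not get around this. It applies Theorem~1.3 of \cite{BrowningMatthiesen2017}, which gives the Hasse principle and weak approximation for the variety $x_\nu^2+y_\nu^2=L_\nu(u,v)\neq0$. After discarding zero and repeated $\alpha_j$, the rest is routine:
\begin{itemize}
\item The real point $(u,v)=(1,t_0)$ and the $p$-adic points $(1,p^{2N_p})$ form an adelic point; Hensel's lemma makes each $1+\alpha_jp^{2N_p}$ a $p$-adic square.
\item Weak approximation then gives rational $(u,v)$ near $(1,t_0)$ with all values Gaussian norms.
\item $t=v/u$ works by multiplicativity of $\mathcal N_i$.
\end{itemize}
An arithmetic input of this strength, controlling infinitely many primes at once, is what your argument is missing.
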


\begin{proof}
Discard the zero values of $\alpha_j$ and retain only one representative
of each remaining value.  The homogeneous linear forms
\[
 u,\qquad v,\qquad u+\alpha_jv
\]
are then nonzero and pairwise nonproportional.  Index these forms by $\nu$
and denote them by $L_\nu(u,v)$.  Theorem~1.3 of Browning and Matthiesen
\cite{BrowningMatthiesen2017}, specialized to two base variables and with
all the norm-form fields chosen to be $\Q(i)$, says that the smooth open
variety defined by
\begin{equation}\label{eq:BM-special}
 x_\nu^2+y_\nu^2=L_\nu(u,v)\neq0
 \qquad\text{for every }\nu
\end{equation}
for these linear forms satisfies the Hasse principle and weak
approximation.  Their hypotheses allow the same field to occur repeatedly;
the required nondegeneracy here is exactly pairwise nonproportionality of
the forms.

Fix an open subinterval $I\subset J$ and $t_0\in I$.  At the real place
take $u_\infty=1$ and $v_\infty=t_0$.  All right-hand sides are positive,
so they are sums of two real squares.  At a finite prime $p$, take
\[
 u_p=1,\qquad \widetilde v_p=p^{2N_p},
\]
where $N_p$ is large enough that
\[
 2N_p+v_p(\alpha_j)\geq
 \begin{cases}1,&p\neq2,\\3,&p=2\end{cases}
 \qquad\text{for every }j.
\]
The value $\widetilde v_p$ is a square, and
\[
 1+\alpha_jp^{2N_p}\in
 \begin{cases}
  1+p\Z_p\subset\Q_p^{\times2},&p\neq2,\\
  1+8\Z_2\subset\Q_2^{\times2},&p=2.
  \end{cases}
 \]
 by Hensel's lemma; cf. \cite[Chapter~II]{Serre1973}.
Thus the indicated open variety has a local point over every completion of
$\Q$.
The points just constructed may be taken integral at all finite places, so
they form an adelic point of the open variety.

Weak approximation now supplies a rational point with $(u,v)$-coordinates
sufficiently close to $(1,t_0)$ at the real place that $u>0$ and
$t=v/u\in I$.  Every one of $u$, $v$ and $u+\alpha_jv$ is a nonzero
global Gaussian norm.  Since $\mathcal N_i$ is a multiplicative subgroup
of $\Q^\times$,
\[
 t=\frac vu\in\mathcal N_i,
 \qquad
 1+\alpha_jt=\frac{u+\alpha_jv}{u}\in\mathcal N_i.
\]
Every open subinterval $I$ contains such a $t$, proving density.
\end{proof}

\begin{remark}[Effectivity of the norm choice]
No height bound for the rational point furnished by
Lemma~\ref{lem:simultaneous-norms} is claimed.  In each fixed
dimension, however, one may enumerate $t\in J\cap\Q$ and test Gaussian
norm membership by checking that every prime $p\equiv3\pmod4$ has even
valuation; this is the norm criterion for $\Q(i)/\Q$, equivalently the
rational two-square theorem \cite[pp.~13--23]{Grosswald1985}.  The lemma
guarantees that this exact search
terminates.
\end{remark}

\begin{proposition}[Rational-coordinate realizations in even dimensions]
\label{prop:even-rational}
For every $r\geq1$, there is an infinite set
$Y_{2r}\subset\Q^{2r}$ with rational pairwise distances such that every
$2r+1$ distinct points are affinely independent.
\end{proposition}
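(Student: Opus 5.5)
The plan is to run the norm-conic construction of Lemma~\ref{lem:norm-chord} with $D=1$, so that $\eta=i$ and $\zeta_1(u)=z(u)$ is the standard rational parametrization of the unit circle. Take $K=K_r$ and the weights $\lambda_k$ and polynomial $B=P_r$ from Lemma~\ref{lem:chebyshev-chord}. Then $G_1(u)=(C_k(u),T_k(u))_{k\in K}$ has rational coordinates for $u\in\Q$. Lemma~\ref{lem:norm-chord} gives $\sqrt{Q_1(G_1(u)-G_1(v))}=2|sB(c)|\in\Q$. It also gives a short interval on which any $2r+1$ distinct points are affinely independent. Restricting to $u\in\Q\cap(0,\eta)$ with $\eta$ small, so that the curve is injective, yields an infinite set in $(\Q^{2r},Q_1)$ with the required properties. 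It then remains only to find a rational isometry $(\Q^{2r},Q_1)\cong(\Q^{2r},I_{2r})$, since such an isometry preserves rationality of coordinates, distances and affine independence.

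By Lemma~\ref{lem:paired-invariants} with $D=1$, we have $\det Q_1=A^2\in\Q^{\times2}$ and $\epsilon_p(Q_1)=(A,-1)_p$, where $A=\prod_k\lambda_k$. By Lemma~\ref{lem:HM} and \eqref{eq:norm-Hilbert}, it therefore suffices that $A\in\mathcal N_i$. Alternatively, and completely explicitly, it suffices that every $\lambda_k\in\mathcal N_i$. Indeed, if $\lambda_k=a_k^2+b_k^2$, then $(X,Y)\mapsto(a_kX-b_kY,\,b_kX+a_kY)$ carries $\langle\lambda_k,\lambda_k\rangle$ to $\langle1,1\rangle$.

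The freedom available is a common rescaling of the identity \eqref{eq:master-square}, together with the perturbation parameter $\eps$ in the inductive construction of the coefficients $b_j$. Multiplying \eqref{eq:master-square} by a positive rational $\mu$ replaces $A$ by $\mu^rA$. If $r$ is odd, the choice $\mu=A$ makes the new product $A^{r+1}$ a square, hence a Gaussian norm, and the odd case is finished.

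The main obstacle is $r$ even, where rescaling cannot change the square class of $A$. Here I would exploit the parameter $\eps$ through Lemma~\ref{lem:simultaneous-norms}. Take the top coefficient to be $b_m=\eps$. Then $\lambda_{2m}=\eps^2$ is already a square, and $\lambda_{2m-1}=2b_{m-1}\eps$. First I would try to arrange, by an earlier rescaling, that $2b_{m-1}$ and the lower weights are Gaussian norms at the previous level. Then $\eps\in\mathcal N_i$ is a linear norm condition of the type handled by Lemma~\ref{lem:simultaneous-norms}. The difficulty is that the lower weights depend on $\eps$ polynomially, in fact quadratically through the products $b_ib_j$.

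To make the lemma applicable, I would restructure the induction so that each new parameter enters the remaining weights only through affine factors $1+\alpha_j t$. For example, one could perturb a single coefficient linearly and factor the weights accordingly, or pass to $\lambda_k$ in a form $\lambda_k^{0}(1+\alpha_k t)$ modulo squares. Lemma~\ref{lem:simultaneous-norms} then gives a dense set of admissible rational $t$, and positivity is kept by working in a small interval. If a full factorization is not achievable, the fallback is to impose only the single condition $A\in\mathcal N_i$. That condition again reduces to finitely many linear norm conditions once $A$ is written as a product of the affine factors, after which Lemma~\ref{lem:HM} supplies the isometry.
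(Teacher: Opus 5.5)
There is a genuine gap, and it sits exactly where you thought the argument was finished. For odd $r$ you multiply \eqref{eq:master-square} by a non-square $\mu$, but Lemma~\ref{lem:norm-chord} needs an identity $B^2=\sum\lambda_kU_k^2$ with $B\in\Q[x]$. After the multiplication the left side is $\mu P_r^2$, so every chord length becomes $2|s|\sqrt{\mu}\,|P_r(c)|\in\sqrt{\mu}\,\Q$. This is why Lemma~\ref{lem:chebyshev-chord}(i) only allows multiplication by rational squares, and a square multiplier never changes the class of $A$ in either parity. No choice of norm parameter rescues this either. For odd $r$ one has $\det Q_D\equiv D$ modulo squares, so $D$ must be a square, hence $Q_D\cong Q_1$. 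You therefore genuinely need weights with $(A,-1)_p=1$ for all $p$, i.e.\ $A\in\mathcal N_i$, and nothing in your proposal produces them.

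The even case is only a programme, and the obstacle you name is real. The constraint $\lambda_0=0$ forces $b_1$ to move with $\eps$, so the weights depend non-affinely on $\eps$ (for instance through $\widetilde b_1(\eps)^2$ in $\lambda_2$ and $\widetilde b_1\eps$ in $\lambda_{m+1}$). They are therefore not of the form $\lambda_k(1+\alpha_k\eps)$, and Lemma~\ref{lem:simultaneous-norms}, which concerns linear forms, does not apply. Your fallback of writing $A$ as a product of affine factors is likewise unavailable.

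The paper reverses the roles of the two parities. For odd $r$ there is no constraint: append $b_m=t$ with all other $b_j$ fixed. By \eqref{eq:cheb-product} each old weight becomes $\lambda_k(1+\alpha_kt)$, and the new weights are $2b_{m-1}t$ and $t^2$. Induct on the statement that all $b_j$ and $\lambda_k$ lie in $\mathcal N_i$. Lemma~\ref{lem:simultaneous-norms}, multiplicativity of $\mathcal N_i$ and $2\in\mathcal N_i$ close the induction, and your explicit block rotations then finish with $D=1$.

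For even $r$ the paper abandons $D=1$. Here $\det Q_D$ is a square for every $D$, so $D$ is a free parameter. When $4\mid r$, taking $D=A$ makes \eqref{eq:QD-Hasse} trivial. When $r\equiv2\pmod4$ and $r>2$, a $2$-adic choice of $\eps$ makes $v_2(A)$ odd. Gauss--Legendre then gives $A=x^2+k$ with $k=y^2+z^2>0$, and $D=kA$ works. The case $r=2$ uses $\langle4,4,1,1\rangle\cong_\Q I_4$.
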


\begin{proof}
We treat the cases $r$ even and $r$ odd separately.

\smallskip
\noindent\emph{Case 1: $r$ is even.}
Write $r=2m$ and choose the positive decomposition in the even case
supplied by
Lemma~\ref{lem:chebyshev-chord}(i) for $r=2m$:
\begin{equation}\label{eq:even-block}
 B_m(x)^2=\sum_{k=1}^{2m}\lambda_kU_k(x)^2,
 \qquad \lambda_k\in\Q_{>0},
\end{equation}
and put $A=\prod_{k=1}^{2m}\lambda_k$.

For the case $r\equiv2\pmod4$, we need the following parity adjustment.

\smallskip
\noindent\emph{Claim 1.}
For every $m\geq2$ and $\sigma\in\{0,1\}$, the weights in
\eqref{eq:even-block} may be chosen so that
\[
 v_2(A)\equiv\sigma\pmod2.
\]
Indeed, the identity comes from
$B_m=\sum_{j=0}^{m}b_jU_{2j}$ subject to $\lambda_0=0$.  At $m=2$, take
\[
 b_0=2,\qquad b_2=\delta=\frac{2^N}{L},\qquad
 b_1=\frac{2}{2+\delta},
\]
where $N>1$ and $L$ is a sufficiently large odd integer.  The coefficient
formula \eqref{eq:lambda-formula} gives
\[
 \lambda_1=4(b_1-\delta),\quad
 \lambda_2=b_1^2+4\delta,\quad
 \lambda_3=2b_1\delta,\quad
 \lambda_4=\delta^2.
\]
Their valuations are $2,0,N+1,2N$, so $v_2(A)=3+3N$ and either parity is
obtained by choosing the parity of $N$.  Inductively append
$b_m=\delta=2^N/L$ and adjust
\[
 b_1(\delta)=
 \frac{b_0^2-2\sum_{j=2}^{m-2}b_jb_{j+1}-2b_{m-1}\delta}
 {2(b_0+b_2)}
\]
to preserve $\lambda_0=0$.  Each old weight is a rational function of
$\delta$, defined and nonzero at $\delta=0$.  Once $N=v_2(\delta)$
exceeds a finite threshold, every perturbation term has strictly larger
$2$-adic valuation than the corresponding old weight; the ultrametric
inequality therefore leaves all old valuations unchanged.  There remain
arbitrarily large choices of either parity for $N$.  The two new weights
$2b_{m-1}\delta$ and $\delta^2$ contribute
$1+v_2(b_{m-1})+3N$ to $v_2(A)$; the parity of this contribution can be
prescribed by the choice of $N$.  Increasing the odd $L$ then makes the
real perturbation small enough to preserve positivity.  This
proves Claim~1.

If $r\equiv0\pmod4$, take $D=A$.  Since $\binom r2$ is even,
Lemma~\ref{lem:paired-invariants} gives
\[
 \epsilon_v(Q_A)=(A,-1)_v(A,A)_v=1
\]
at every place, while its determinant is a square.  Hence
$Q_A\cong_\Q I_{2r}$ by Lemma~\ref{lem:HM}.

Suppose next that $r\equiv2\pmod4$.  If $r=2$, using
\eqref{eq:base-cheb} in the norm-conic construction with $D=1$ gives
$Q_1=\langle4,4,1,1\rangle\cong_\Q I_4$.  If $r>2$, use Claim~1 to make
$v_2(A)$ odd.  Write $A=a/b$ in lowest terms.  Since $v_2(ab)$ is odd,
the Gauss--Legendre three-square theorem
\cite[Chapter~IV, Appendix]{Serre1973} gives
\begin{equation}\label{eq:A-three}
 A=x^2+y^2+z^2,\qquad x,y,z\in\Q.
\end{equation}
Indeed, $ab$ is not of the excluded form $4^e(8\ell+7)$, and an integral
representation of $ab$ may be divided by $b^2$.  The odd valuation shows
that $A$ is not a rational square, so after permutation we may assume
$k=y^2+z^2>0$.  Put $D=kA$.  Because $\binom r2$ is odd,
\eqref{eq:QD-Hasse} reduces to
\[
 \epsilon_v(Q_D)=(A,-k)_v(k,-1)_v=1.
\]
Both Hilbert symbols are equal to $1$ because $A=x^2+k$ is a norm from
$\Q(\sqrt{-k})$ and $k=y^2+z^2$ is a Gaussian norm; see
\eqref{eq:norm-Hilbert}.  The determinant is again a square, and
$Q_D\cong_\Q I_{2r}$.

In either subcase choose $T_D\in\operatorname{GL}_{2r}(\Q)$ such that
$Q_D(x)=\norm{T_Dx}_2^2$.  By
Lemma~\ref{lem:norm-chord}, after restricting to a sufficiently short
real parameter interval $I$, the set
\[
 Y_{2r}:=\{T_DG_D(u):u\in\Q\cap I\}\subset\Q^{2r}
\]
has rational pairwise distances and every $2r+1$ distinct points are
affinely independent.  After shrinking $I$ further so that the
parametrized arc is injective, the set is infinite.

\smallskip
\noindent\emph{Case 2: $r$ is odd.}
Write $r=2m+1$.  We use the following consequence of
Lemma~\ref{lem:simultaneous-norms}.

\smallskip
\noindent\emph{Claim 2.}
There are positive rational $b_0,\ldots,b_m$ and
$\lambda_0,\ldots,\lambda_{2m}$, all in $\mathcal N_i$, such that
\begin{equation}\label{eq:Gaussian-Cheb}
 \left(\sum_{j=0}^{m}b_jU_{2j}(x)\right)^2
 =\sum_{k=0}^{2m}\lambda_kU_k(x)^2.
\end{equation}
For $m=0$ take $b_0=\lambda_0=1$.  Given the assertion at level $m-1$,
append $tU_{2m}$.  The product identity
\eqref{eq:cheb-product} gives
\begin{align*}
 \lambda_k(t)&=\lambda_k-2tb_{m-k-1},&&0\leq k\leq m-1,\\
 \lambda_k(t)&=\lambda_k+2tb_{k-m},&&m\leq k\leq2m-2,
\end{align*}
 together with the two new weights
$\lambda_{2m-1}(t)=2b_{m-1}t$ and $\lambda_{2m}(t)=t^2$.  On some interval
$J=(0,\delta)$ all old weights remain positive, and every ratio
$\lambda_k(t)/\lambda_k$ has the form $1+\alpha_kt$.
Lemma~\ref{lem:simultaneous-norms} supplies
$t\in J\cap\mathcal N_i$ for which all these ratios
are Gaussian norms.  Multiplicativity of $\mathcal N_i$ and the fact that
$2\in\mathcal N_i$ give the required norm conditions.  This proves
Claim~2.

Choose $a_k,c_k\in\Q$ with $\lambda_k=a_k^2+c_k^2$, and set
\[
 M_k=\begin{pmatrix}a_k&-c_k\\c_k&a_k\end{pmatrix},
 \qquad M_k^{\mathsf T}M_k=\lambda_kI_2,
 \qquad M=\diag(M_0,\ldots,M_{2m}).
\]
For $D=1$, the curve $G_1(u)$ is the harmonic curve arising from the
standard rational parametrization of the unit circle.  Hence
$MG_1(u)\in\Q^{2r}$ for $u\in\Q$, while its squared Euclidean chord
length is given by the $D=1$ case of
Lemma~\ref{lem:norm-chord}, together with
\eqref{eq:Gaussian-Cheb}.
The invertible block transformation preserves the nonzero Wronskian.
Lemma~\ref{lem:short-arc} therefore gives a sufficiently short
injective interval $I$ on which
\[
 Y_{2r}:=\{MG_1(u):u\in\Q\cap I\}
\]
is infinite, has rational pairwise distances, and every $2r+1$ distinct
points are affinely independent.
\end{proof}

\subsection{\texorpdfstring{$d\equiv1\pmod4$}{d congruent to 1 mod 4}}

Let $m\geq1$, $r=2m+1$ and $d=4m+1$.  The source curve now has the
weights $\lambda_0,\ldots,\lambda_{2m}$ in
\eqref{eq:lambda-expansion}.  We require an explicit normalized
choice.

\begin{lemma}
\label{lem:effective-weights}
Let $M$ be the least odd integer with
$M>\lfloor\log_2m\rfloor$, and put
\begin{equation}\label{eq:effective-data}
 L=2^{M+4}+1,\qquad \xi=\frac{2^M}{L},\qquad
 S=\sum_{j=1}^{m}\xi^{2j}.
\end{equation}
Define
\begin{equation}\label{eq:effective-b}
 b_0=\frac{1-S}{1+S},\qquad
 b_j=\frac{2\xi^j}{1+S}\quad(1\leq j\leq m).
\end{equation}
Then every weight $\lambda_0,\ldots,\lambda_{2m}$ is positive,
\begin{equation}\label{eq:rho-one}
 \sum_{k=0}^{2m}\lambda_k=\sum_{j=0}^{m}b_j^2=1,
\end{equation}
and, for $A=\prod_{k=0}^{2m}\lambda_k$,
\begin{equation}\label{eq:v2-A}
 v_2(A)=4m+Mm(2m+1)+2v_2(m!),
 \qquad v_2(A)\equiv m\pmod2.
\end{equation}
\end{lemma}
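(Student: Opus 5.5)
The plan is to compute every weight $\lambda_k$ in closed form from \eqref{eq:lambda-formula}, using the geometric shape $b_j=2\xi^j/(1+S)$ for $j\geq1$. Then I read off positivity, the sum \eqref{eq:rho-one}, and the $2$-adic valuations directly.

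\emph{Step 1: the sum.}
Summing \eqref{eq:lambda-formula} over all $k$, the first sums together run over all ordered pairs $(i,j)$, which gives $\sum_j b_j^2+2\sum_{i<j}b_ib_j$. The second sums run over all pairs $i<j$, each with coefficient $-2$. Hence $\sum_k\lambda_k=\sum_j b_j^2$. With \eqref{eq:effective-b}, this equals
\[
 \frac{(1-S)^2+4S}{(1+S)^2}=1.
\]

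\emph{Step 2: explicit weights.}
For $m<k\leq 2m$, the negative sum in \eqref{eq:lambda-formula} is empty, because $j=i+k+1>m$. The positive sum involves only $i,j\geq1$ with $i+j=k$, and there are $2m+1-k$ such ordered pairs. Therefore
\[
 \lambda_k=\frac{4(2m+1-k)\xi^k}{(1+S)^2}>0 .
\]
For $1\leq k\leq m$, I group the term $2b_0b_k$ with the $k-1$ terms $b_ib_{k-i}$, $1\leq i\leq k-1$. This gives a main term $4\xi^k(k-S)/(1+S)^2$ together with the correction
\[
 -2\sum_{i\geq0}b_ib_{i+k+1}.
\]
After pulling out $4\xi^k/(1+S)^2$, the correction carries at least an extra factor $\xi$. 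Finally, $\lambda_0=b_0^2-2\sum_j b_jb_{j+1}$ is $1$ plus terms of size $O(\xi)$.

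\emph{Step 3: positivity.}
Since $L>2^{M+4}$, we have $0<\xi<1/16$, so $S<\xi^2/(1-\xi^2)$ is tiny. A crude geometric-series bound then shows the correction is smaller than $4\xi^k(k-S)/(1+S)^2$ for $k\geq1$. Likewise $\lambda_0\geq b_0^2-O(\xi)>0$.

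\emph{Step 4: $2$-adic valuations, the main obstacle.}
We have $v_2(\xi)=M$ and $v_2(S)=2M>0$, so $1\pm S$ are $2$-adic units. For $k=0$, $\lambda_0\equiv b_0^2$ modulo higher valuation, so $v_2(\lambda_0)=0$. For $m<k\leq 2m$, Step 2 gives
\[
 v_2(\lambda_k)=2+kM+v_2(2m+1-k).
\]
For $1\leq k\leq m$, the bracket $k-S$ has valuation $v_2(k)$, because $v_2(S)=2M>v_2(k)$. The correction has valuation at least $2+(k+1)M$, which exceeds $2+kM+v_2(k)$ precisely because $v_2(k)\leq\lfloor\log_2 m\rfloor<M$. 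This is where the choice of $M$ is used. So by the ultrametric inequality,
\[
 v_2(\lambda_k)=2+kM+v_2(k).
\]
The delicate point is the cancellation $2b_0b_k+\sum b_ib_{k-i}$. The two pieces have equal valuation, and only their combination produces the factor $k$. I expect verifying this cleanly, and checking that no $b_0$-correction from $1-S$ interferes, to be the main work.

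\emph{Step 5: summing the valuations.}
Adding over $k=1,\ldots,2m$ gives $4m$ from the $2$'s and $M\sum_{k=1}^{2m}k=Mm(2m+1)$. The valuation terms contribute $\sum_{k=1}^m v_2(k)$ from $1\leq k\leq m$, and $\sum_{k=m+1}^{2m}v_2(2m+1-k)$, which runs over the same values $1,\ldots,m$. Together they give $2v_2(m!)$. This is \eqref{eq:v2-A}. Since $M$ is odd, the total is $\equiv m(2m+1)\equiv m\pmod2$.
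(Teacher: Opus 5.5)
Your proposal is correct and follows the paper's proof essentially step for step. The paper also writes $\lambda_k=\Lambda_k/(1+S)^2$ with the same closed forms: a main term $4\xi^k(k-S)$ minus a correction divisible by $\xi^{k+1}$ for $1\leq k\leq m$, and $4(2m+1-k)\xi^k$ for $k>m$. It likewise bounds positivity using $\xi<1/16$, obtains \eqref{eq:rho-one} from $(1-S)^2+4S=(1+S)^2$, and reads off each valuation from a unique lowest $2$-adic term using $M>v_2(k)$.

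The ``delicate point'' you flag in Step~4 is already settled by your exact identity $4\xi^k(1-S)+4(k-1)\xi^k=4\xi^k(k-S)$ from Step~2, so nothing further is needed there. Incidentally, the two pieces do not always have equal valuation: for odd $k\geq3$ they differ.
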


\begin{proof}
Put $c_0=1-S$, $c_j=2\xi^j$ and
$\lambda_k=\Lambda_k/(1+S)^2$.  Substitution in
\eqref{eq:lambda-formula} gives
\begin{align}
 \Lambda_0&=(1-S)^2-4\xi(1-S)
 -8\sum_{i=1}^{m-1}\xi^{2i+1},\label{eq:Lambda0}\\
 \Lambda_k&=4\xi^k\left(k-S-\xi(1-S)
 -2\sum_{i=1}^{m-k-1}\xi^{2i+1}\right),
 &&1\leq k<m,\label{eq:Lambdak}\\
 \Lambda_m&=4\xi^m(m-S),\label{eq:Lambdam}\\
 \Lambda_{m+j}&=4(m-j+1)\xi^{m+j},
 &&1\leq j\leq m.\label{eq:Lambdahigh}
\end{align}
Since $0<\xi<1/16$,
$S<\xi^2/(1-\xi^2)<1/255$.  Hence
\[
 \Lambda_0>
 \left(\frac{254}{255}\right)^2-\frac14-\frac1{510}>0,
\]
and the bracket in \eqref{eq:Lambdak} is at least
$1-1/255-1/16-1/2040>0$.  The remaining expressions are manifestly
positive.  Moreover $(1-S)^2+4S=(1+S)^2$, which proves
\eqref{eq:rho-one}.

The denominator $(1+S)^2$ is a $2$-adic unit.  Since
$M>v_2(k)$ for $1\leq k\leq m$, the lowest $2$-adic term in each bracket
above is unique, and
\begin{align*}
 v_2(\Lambda_0)&=0,\\
 v_2(\Lambda_k)&=2+Mk+v_2(k),&&1\leq k\leq m,\\
 v_2(\Lambda_{m+j})&=2+M(m+j)+v_2(m-j+1),
 &&1\leq j\leq m.
\end{align*}
Summing gives the first equality in \eqref{eq:v2-A}; since $M$ is
odd, reduction modulo $2$ gives the second.
\end{proof}

\begin{proposition}[Rational-coordinate realizations in dimensions
$d\equiv1\pmod4$]
\label{prop:odd-rational}
For every $m\geq0$, there is an infinite set
$Y_{4m+1}\subset\Q^{4m+1}$ with rational pairwise distances such that
every $4m+2$ distinct points are affinely independent and no $4m+3$
distinct points are cospherical.
\end{proposition}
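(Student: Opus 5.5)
We realize the construction on the source quadric $(\Q^{4m+2},Q_D)$ and then project stereographically from the pole, using Lemma~\ref{lem:Q-projection-transfer}. The case $m=0$ is trivial, since any infinite subset of $\Q$ has rational distances and $d=1$ imposes nothing further; so assume $m\geq1$, $r=2m+1$, $n=4m+1$.

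\emph{Source curve.} Take the weights $\lambda_0,\ldots,\lambda_{2m}$ of Lemma~\ref{lem:effective-weights}, with $K=\{0,\ldots,2m\}$, $B=\sum_j b_jU_{2j}$, and a value of $D\in\Q_{>0}$ fixed below. Lemma~\ref{lem:norm-chord} gives the curve $G_D(u)\in\Q^{2r}$ for $u\in\Q$. Its $Q_D$-chords between rational parameters have rational length. Moreover, on a short closed interval $[0,\delta]$, any $2r+1=n+2$ distinct points are affinely independent.

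The pole is $N=G_D(0)$, because $\zeta_D(0)=1$, so $N$ has $X_k=1$ and $Y_k=0$ in every block. By \eqref{eq:rho-one},
\[
 \rho=Q_D(N)=\sum_k\lambda_k/D=1/D\in\Q_{>0}.
\]
Chords from $N$ to rational points are again covered by Lemma~\ref{lem:norm-chord} with $v=0$. After shrinking $\delta$, the arc is injective. Hence all hypotheses of Lemma~\ref{lem:Q-projection-transfer} hold, and the projected set in $H=N^{\perp_Q}$ has rational $h$-distances. Every $n+1$ of its points are affinely independent, and no $n+2$ of them are $h$-cospherical.

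\emph{Arithmetic of $h$.} It remains to show $(H,h)\cong_\Q(\Q^n,I_n)$. Since $Q_D\cong\langle\rho\rangle\perp h$ over $\Q$, we have
\[
 \det h\equiv\rho\det Q_D\equiv D^{-1}D^{-r}=D^{-(2m+2)}\pmod{\Q^{\times2}},
\]
which is a square. The Hasse invariants satisfy
\[
 \epsilon_v(Q_D)=\epsilon_v(h)\,(\rho,\det h)_v=\epsilon_v(h),
\]
because $\det h$ is a square. So by Lemma~\ref{lem:HM} it suffices to make $\epsilon_p(Q_D)=1$ for all finite $p$. Lemma~\ref{lem:paired-invariants} gives
\[
 \epsilon_v(Q_D)=(A,-1)_v(D,A)_v(D,-1)_v^{m(2m+1)}.
\]
Here the parity of $\binom r2=m(2m+1)$ is the parity of $m$, and this is exactly what \eqref{eq:v2-A} was designed to match.

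If $m$ is even, take $D=A$. Then
\[
 \epsilon_v=(A,-1)_v(A,A)_v=1.
\]
If $m$ is odd, $v_2(A)$ is odd by \eqref{eq:v2-A}. As in Case~1 of Proposition~\ref{prop:even-rational}, Gauss--Legendre gives $A=x^2+y^2+z^2$ with $k=y^2+z^2>0$, since $A$ is not a square. Take $D=kA$. The Hasse formula then reduces to
\[
 \epsilon_v=(A,-k)_v(k,-1)_v=1
\]
by \eqref{eq:norm-Hilbert}.

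Composing the projected set with a rational isometry $H\to\Q^n$ gives $Y_{4m+1}$. This isometry preserves rationality of distances, affine independence and cosphericity, so $Y_{4m+1}$ has all the required properties.

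\emph{Main obstacle.} The delicate step is the local-invariant bookkeeping: passing from $Q_D$ to its restriction $h$ on $N^\perp$, and checking that $\rho=1/D$ makes the determinant square and leaves the Hasse invariant unchanged. This is where the normalization $\sum\lambda_k=1$ and the parity $v_2(A)\equiv m$ from Lemma~\ref{lem:effective-weights} are both essential. I would double-check the splitting $Q_D\cong\langle\rho\rangle\perp h$ and the Hasse product formula for an orthogonal sum with the convention $\epsilon_v=\prod_{i<j}(a_i,a_j)_v$.
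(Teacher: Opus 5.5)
Your proposal is correct and follows the paper's proof essentially step for step. It uses the same weights from Lemma~\ref{lem:effective-weights}, the same choice $D=A$ or $D=kA$ according to the parity of $m$, the same splitting $Q_D\cong\langle 1/D\rangle\perp h$ with the Hasse-invariant reduction, and then Lemma~\ref{lem:Q-projection-transfer} followed by a rational isometry.

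The one hypothesis you leave unchecked is that the whole arc lies on $S_{Q_D}(1/D)$, not just the pole $N$. This follows blockwise from $C_k(u)^2+DT_k(u)^2=1$ together with $\sum_k\lambda_k=1$.
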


\begin{proof}
For $m=0$, take $Y_1=\Q$.  Henceforth assume $m\geq1$ and take the
weights supplied by Lemma~\ref{lem:effective-weights}.

Choose the norm parameter
\begin{equation}\label{eq:D-odd}
 D=\begin{cases}
 A,&m\text{ even},\\
 kA,&m\text{ odd},
 \end{cases}
\end{equation}
where, when $m$ is odd, we choose rational numbers $x,y,z$ satisfying
\begin{equation}\label{eq:three-odd}
 A=x^2+y^2+z^2=x^2+k,
 \qquad k=y^2+z^2>0.
\end{equation}
Indeed, when $m$ is odd, \eqref{eq:v2-A} makes $v_2(A)$ odd.
  Writing $A=a/b$ in lowest terms, the Gauss--Legendre three-square theorem
  \cite[Chapter~IV, Appendix]{Serre1973} represents $ab$ as three integral
  squares.  Dividing by $b^2$ gives
\eqref{eq:three-odd}; a finite search produces the representation.

Use the norm-conic curve $G_D(u)$ from
\eqref{eq:norm-param}--\eqref{eq:C-T}, with indices
$0\leq k\leq2m$.  For each block,
\[
 C_k(u)^2+D T_k(u)^2=1.
\]
Hence \eqref{eq:rho-one} gives
\[
 Q_D(G_D(u))=\frac1D.
\]
Let $N=((1,0))_{k=0}^{2m}$, let $H=N^{\perp_{Q_D}}$, and put
$h_D=Q_D|_H$.  The orthogonal splitting is
\[
 Q_D\cong_\Q\left\langle\frac1D\right\rangle\perp h_D.
\]

\smallskip
We claim that, with $D$ chosen by \eqref{eq:D-odd},
$(H,h_D)\cong_\Q(\Q^{4m+1},I_{4m+1})$.
Indeed, the orthogonal splitting and
Lemma~\ref{lem:paired-invariants} give
\begin{equation}\label{eq:hdet}
 \det h_D\equiv\frac{A^2}{D^{2m}}\equiv1
 \pmod{\Q^{\times2}}.
\end{equation}
At every place $v$, the orthogonal-sum formula gives
\[
 \epsilon_v(Q_D)
 =\epsilon_v(h_D)(1/D,\det h_D)_v.
\]
The last Hilbert symbol is $1$ by \eqref{eq:hdet}; hence
\begin{equation}\label{eq:hHasse}
 \epsilon_v(h_D)=\epsilon_v(Q_D)
 =(A,-1)_v(D,A)_v(D,-1)_v^m.
\end{equation}
If $m$ is even, $D=A$ and
\[
 \epsilon_v(h_D)=(A,-1)_v(A,A)_v=1.
\]
If $m$ is odd, $D=kA$ and Hilbert-symbol identities reduce
\eqref{eq:hHasse} to
\[
 \epsilon_v(h_D)=(A,-k)_v(k,-1)_v=1.
\]
Both Hilbert symbols are equal to $1$ by the two norm representations in
\eqref{eq:three-odd} and the criterion
\eqref{eq:norm-Hilbert}.  By Lemma~\ref{lem:HM}, this proves the
claim.  Fix a rational isometry
\[
 \tau_m:(H,h_D)\longrightarrow(\Q^{4m+1},I_{4m+1}).
\]

Near $u=0$, an invertible block transformation and a locally invertible
analytic change of parameter identify the real source curve with the
corresponding harmonic curve.  Lemma~\ref{lem:short-arc}, applied to
the closed source arc, therefore gives $\delta>0$ such that
every $4m+3$ distinct points of
$\{G_D(u):0\leq u\leq\delta\}$ are affinely independent.  In particular,
the conclusion applies simultaneously to points of the open arc and to the
endpoint $N=G_D(0)$.  Lemma~\ref{lem:norm-chord} shows that every
chord joining two source points with rational parameters has rational
$Q_D$-length.  Apply Lemma~\ref{lem:Q-projection-transfer}
with
\[
 n=4m+1,\qquad Q=Q_D,\qquad \rho=\frac1D,
 \qquad \gamma(u)=G_D(u).
\]
It yields rational distances induced by $h_D$, shows that every $4m+2$
projected points are affinely independent, and shows that no $4m+3$
projected points lie on a common $h_D$-sphere.  Finally compose with
$\tau_m$.  The
resulting rational-parameter image is the required infinite set
$Y_{4m+1}\subset\Q^{4m+1}$.
\end{proof}

Propositions~\ref{prop:even-rational} and
\ref{prop:odd-rational} prove
Theorem~\ref{thm:arithmetic-main}.

\begin{remark}
The Browning--Matthiesen theorem is used only in
Lemma~\ref{lem:simultaneous-norms}, hence only for the
$d\equiv2\pmod4$ rational-coordinate refinement in dimensions $d\geq6$;
the case $d=2$ does not use it.  The construction in $\R^d$, the
rational-coordinate realizations for $d\equiv0\pmod4$, and the
general-position realizations in $\Q^d$ for $d\equiv1\pmod4$ are
independent of that input.  We make no rational-coordinate general-position claim for
$d\equiv3\pmod4$.
\end{remark}

%% file: 05-consequences-and-dimension-three.tex
\section{Finite consequences and an explicit three-dimensional curve}
\label{sec:consequences}

\subsection{Cyclic polytopes}

\begin{proof}[Proof of Theorem~\ref{thm:cyclic-intro}]
Let $p_d:I_d\to\R^d$ denote the real-analytic parametrization used in
the proof of Theorem~\ref{thm:geometric-main}, restricted to its short
interval.  Choose ordered rational parameters $t_1<\cdots<t_n$ in $I_d$
and write $p_i=p_d(t_i)$.  For every increasing $(d+1)$-tuple of indices
$i_0<\cdots<i_d$, define
\[
 \Delta(t_{i_0},\ldots,t_{i_d})=
 \det\begin{pmatrix}
 1&p_d(t_{i_0})^{\mathsf T}\\
  \vdots&\vdots\\
 1&p_d(t_{i_d})^{\mathsf T}
 \end{pmatrix}.
\]
The determinant is nonzero throughout the connected chamber
\[
 \{(s_0,\ldots,s_d)\in I_d^{d+1}:s_0<\cdots<s_d\}.
\]
Its sign is therefore constant.  The ordered configuration has the
alternating chirotope, hence the alternating oriented matroid; its convex
hull is combinatorially equivalent to the cyclic polytope $C(n,d)$
\cite{BjornerEtAl1999,Ziegler1995}.  All pairwise distances are rational,
and a single scaling factor makes them integral without changing the oriented
matroid or the combinatorial type.
\end{proof}

In the rational-coordinate cases of Theorem~\ref{thm:arithmetic-main}, the
argument in the proof of Theorem~\ref{thm:cyclic-intro}, followed by one
dilation, puts the realization in $\Z^d$.  Thus, for every even $d$ and
every $d\equiv1\pmod4$, $C(n,d)$ has a lattice realization in which all
pairwise distances are integers.

\subsection{An explicit rational curve in three dimensions}

In dimension three, the rational isometry can be made explicit.

\paragraph{Construction of the curve.}
The identity \eqref{eq:base-cheb} has frequencies $2,3$ and weights
$(4,1)$.  For a general norm parameter $D$, the source quadratic form and
the $Q_D$-orthogonal complement of the pole $N=(1,0,1,0)$ are
\[
 Q_D=\left\langle\frac4D,4\right\rangle
 \perp\left\langle\frac1D,1\right\rangle,
 \qquad
 N^{\perp_{Q_D}}=\{(a,b,-4a,e)\},
\]
with
\[
 Q_D(a,b,-4a,e)=\frac{20}{D}a^2+4b^2+e^2.
\]
We take $D=5$.  Then $Q_5(N)=1$, and
\[
 J(a,b,-4a,e)=(2a,2b,e)
\]
is a rational isometry from
$(N^{\perp_{Q_5}},Q_5|_{N^{\perp_{Q_5}}})$ to $(\Q^3,I_3)$, the standard
sum-of-three-squares form.

Let $\eta^2=-5$ and, for a real parameter $u$, put
\begin{equation}\label{eq:csF}
 c(u)=\frac{1-5u^2}{1+5u^2},\qquad
 s(u)=\frac{2u}{1+5u^2},\qquad F(u)=4c(u)^2+1.
\end{equation}
For rational $u$, all three functions in \eqref{eq:csF} are rational.
Moreover,
\[
 \zeta(u)=\frac{1+\eta u}{1-\eta u}=c(u)+\eta s(u),
 \qquad c(u)^2+5s(u)^2=1.
\]
Write
\[
 \zeta(u)^4=C_1+\eta T_1,
 \qquad
 \zeta(u)^6=C_2+\eta T_2.
\]
Using $c^2+5s^2=1$ gives
\[
 \begin{aligned}
 C_1&=8c^4-8c^2+1,
 &T_1&=4cs(2c^2-1),\\
 C_2&=32c^6-48c^4+18c^2-1,
 &T_2&=2cs(16c^4-16c^2+3).
 \end{aligned}
\]
The source point $G(u)=(C_1,T_1,C_2,T_2)$ satisfies
$Q_5(G(u))=1$ and $G(0)=N$.  If
\[
 \Delta=5-4C_1-C_2,
\]
then the bilinear form associated with $Q_5$ satisfies
$B_5(G(u),N)=(4C_1+C_2)/5$, and
\eqref{eq:Q-stereo} gives
\[
 \sigma_N^{Q_5}(G(u))=
 \left(
 \frac{C_1-C_2}{\Delta},
 \frac{5T_1}{\Delta},
 -\frac{4(C_1-C_2)}{\Delta},
 \frac{5T_2}{\Delta}
 \right).
\]
The factorizations
\[
 \Delta=10s^2(4c^2+1)^2,
 \qquad
 C_1-C_2=10s^2(16c^4-12c^2+1)
\]
show that the projected point
$P(u):=J\sigma_N^{Q_5}(G(u))$ has the following rational coordinates:
\begin{equation}\label{eq:P3}
 P(u)=\left(
 \frac{2(16c^4-12c^2+1)}{F^2},
 \frac{4c(2c^2-1)}{sF^2},
 \frac{c(16c^4-16c^2+3)}{sF^2}
 \right),
\end{equation}
where $c,s,F$ are evaluated at $u$.

\begin{theorem}[Explicit three-dimensional curve]\label{thm:P3}
The set
\[
 \{P(u):u\in\Q\cap(0,1/6)\}\subset\Q^3
\]
is infinite and has rational pairwise distances.  Moreover, every four
distinct points are affinely independent, and no five distinct points are
cospherical.
\end{theorem}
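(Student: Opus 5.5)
This is the case $m$ replaced by the explicit data $K=\{1,2\}$, weights $(4,1)$, $D=5$, in the framework of Lemma~\ref{lem:Q-projection-transfer}. The plan is to verify the hypotheses of that lemma for the source arc $\gamma(u)=G(u)$ on $[0,1/6]$, with $n=3$, $Q=Q_5$ and $\rho=1$. The rational isometry $J$ then transfers the conclusions to standard $\Q^3$. Since $J\sigma_N^{Q_5}(G(u))=P(u)$, the target set is exactly the one in the statement.

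\textbf{Rationality and chords.} The coordinates $C_k,T_k$ are rational for rational $u$, so $G(u)\in\Q^4$. We have $Q_5(G(u))=1=Q_5(N)$, since $C_k^2+5T_k^2=1$ and the weights sum to $5$ after the scaling $4/D+1/D=1$. The chord rationality follows from Lemma~\ref{lem:norm-chord} applied to the identity \eqref{eq:base-cheb}, namely $(2U_0+U_2)^2=4U_1^2+U_2^2$, with $B=2U_0+U_2$ and $D=5$. It gives $\sqrt{Q_5(G(u)-G(v))}=2|sB(c)|\in\Q$ for rational $u,v$, and this includes $v=0$, i.e.\ chords to $N$. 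The closed form \eqref{eq:P3} is only a convenience; I would check that the factorizations of $\Delta$ and $C_1-C_2$ are consistent, but the proof does not depend on them.

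\textbf{Injectivity and convexity on the explicit interval.} This is the main obstacle, because Lemma~\ref{lem:norm-chord} and Lemma~\ref{lem:short-arc} only give an unspecified short interval, while the statement fixes $(0,1/6)$. The angle is $\theta(u)=2\arctan(\sqrt5\,u)$. For $u\le 1/6$ we have $\sqrt5/6<0.373$, so $\theta<0.72$. Injectivity is then immediate, since $u\mapsto\zeta(u)$ is injective and $\zeta^4$ already separates angles below $\pi/2$.

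For affine independence of any $5$ distinct points on the closed arc, including $N$, I would avoid the limiting argument and prove directly that the harmonic curve $\theta\mapsto(2\cos2\theta,2\sin2\theta,\cos3\theta,\sin3\theta)$, after the invertible block rescaling, has no $5$ points on a hyperplane for $\theta\in[0,\theta_0]$ with $\theta_0\approx0.72$. A hyperplane section corresponds to a real trigonometric polynomial
\[
a_0+a_1\cos2\theta+b_1\sin2\theta+a_2\cos3\theta+b_2\sin3\theta.
\]
Substituting $w=e^{i\theta}$ turns this into a polynomial of degree $6$ in $w$ with roots on the unit circle. The task is to show it cannot have $5$ roots on an arc of length below $0.72$.

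My first attempt would be a Descartes/Rolle-type count. The coefficients of $w^0$ and $w^6$ are conjugate, and the monomials $w^1$ and $w^5$ are absent. Five unit roots on a short arc would force the remaining root to be determined by the product relation. Combining the vanishing of the $w^1$ and $w^5$ coefficients through the elementary symmetric functions of the roots should give a contradiction when all roots lie in an arc of length $<\pi/4$.

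If that becomes messy, the fallback is to bound the divided-difference quotient $G$ of Lemma~\ref{lem:short-arc} explicitly: show it stays away from zero on $[0,\theta_0]^5$ by comparing it with its diagonal value given by \eqref{eq:wronskian}, using Taylor bounds on the frequencies $2$ and $3$.

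\textbf{Conclusion.} Once the closed-arc hypothesis holds, Lemma~\ref{lem:Q-projection-transfer} with $n=3$ gives the remaining properties: the image is infinite with rational $h$-distances, every $4$ points are affinely independent, and no $5$ points lie on an $h$-sphere. Composing with $J$ transfers these properties to $\Q^3$.
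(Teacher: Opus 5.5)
Your reduction matches the paper's: apply Lemma~\ref{lem:Q-projection-transfer} with $n=3$, $Q=Q_5$, $\rho=1$ and $\gamma=G$ on $[0,1/6]$, then compose with $J$. Your treatment of rational coordinates, chord lengths (including chords to $N$) and injectivity is correct. The gap is the one step specific to this theorem: affine independence of every five distinct points of the closed arc $\{G(u):0\le u\le1/6\}$. You only name two strategies, and both are set up with wrong data.

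\begin{itemize}
\item With $\theta=2\arctan(\sqrt5\,u)$ one has $\zeta^4=e^{4i\theta}$ and $\zeta^6=e^{6i\theta}$. So the curve has frequencies $4,6$ in $\theta$, as your own injectivity argument implicitly uses. In the frequency-$(2,3)$ normalization the parameter is $\phi=4\arctan(\sqrt5\,u)$. The arc then has length $\phi(1/6)=4\arctan(\sqrt5/6)\approx1.43$, not $0.72$. A bound valid only for arcs shorter than $\pi/4$ would not cover the statement.
\item With $w=e^{i\phi}$, $w^3\bigl(a_0+a_1\cos2\phi+b_1\sin2\phi+a_2\cos3\phi+b_2\sin3\phi\bigr)=\alpha w^6+\beta w^5+a_0w^3+\bar\beta w+\bar\alpha$. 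The absent monomials are $w^2$ and $w^4$, not $w$ and $w^5$.
\item The Taylor fallback would need a quantitative estimate far from the diagonal, on an arc over which the frequency-$3$ coordinates turn through more than $4$ radians. Nothing in the proposal supplies this.
\end{itemize}

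The paper settles this step by exact algebra. It rotates the first of five parameters to $0$ and puts $y_j=\tan(\vartheta_j/2)$. The affine determinant then factors as $-64\prod_j 2y_j(1+y_j^2)^{-3}\prod_{i<j}(y_i-y_j)\,R$, where $R$ is an explicit quadratic in the elementary symmetric functions of the $y_j$. Since $0<y_j\le12\sqrt5/31<1$, one gets $R<-50$.

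Your root-location idea can also be completed once the data are corrected, and it would give a genuinely different, shorter argument.
\begin{itemize}
\item If $\alpha=0$, then $\beta\neq0$ and the polynomial has at most four nonzero roots, so it cannot vanish at five distinct unimodular points.
\item If $\alpha\neq0$, the polynomial is self-inversive, so roots off the unit circle come in pairs. Five distinct unimodular roots therefore force all six roots $w_1,\ldots,w_6$ to be unimodular.
\item The vanishing $w^4$-coefficient gives $e_2(w_1,\ldots,w_6)=0$, that is, $(s^2-t)/2+w_6s=0$ with $s=\sum_{j\le5}w_j$ and $t=\sum_{j\le5}w_j^2$. Hence $|s^2-t|=2|s|$.
\item This condition is invariant under a common rotation, so centre the arc at $1$. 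Then $|s|\ge5\cos\bigl(\phi(1/6)/2\bigr)=155/41>1+\sqrt6$. It follows that $|s^2-t|\ge|s|^2-5>2|s|$, a contradiction.
\end{itemize}
As written, however, your proposal does not contain this decisive estimate.
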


\begin{proof}
\medskip\noindent\emph{Distances and injectivity.}
For distinct $u,v$, abbreviate $c_u=c(u)$ and $s_u=s(u)$, and put
\[
 C=c_uc_v+5s_us_v,
 \qquad
 S=s_uc_v-c_us_v.
\]
Lemma~\ref{lem:norm-chord}, specialized to
\eqref{eq:base-cheb}, gives
\[
 Q_5(G(u)-G(v))=4S^2(4C^2+1)^2,
 \qquad
 Q_5(G(w)-N)=4s_w^2(4c_w^2+1)^2.
\]
The projection distance formula \eqref{eq:Q-stereo-distance} and the
isometry $J$ therefore give the exact identity
\begin{equation}\label{eq:P3-distance}
 \norm{P(u)-P(v)}=
 \frac{|S|(4C^2+1)}
 {|s_us_v|(4c_u^2+1)(4c_v^2+1)}.
\end{equation}
Every term on the right is rational.  Moreover,
\[
 S=\frac{2(u-v)(1+5uv)}
 {(1+5u^2)(1+5v^2)}\neq0
\]
for distinct positive $u,v$, so $P$ is injective on
$\Q\cap(0,1/6)$, and hence its image there is infinite.

\medskip\noindent\emph{General position.}
Up to an invertible real linear transformation, the source curve is
\[
 q(\phi)=(\cos3\phi,\sin3\phi,2\cos2\phi,2\sin2\phi),
 \qquad \phi=4\arctan(\sqrt5u).
\]
Take five distinct source parameters
$0\leq u_0<\cdots<u_4\leq1/6$.  Apply the corresponding rotations in
both frequency blocks so that the first parameter is sent to $0$, write
$\vartheta_j=\phi_j-\phi_0$, and set
$y_j=\tan(\vartheta_j/2)$ for $1\leq j\leq4$.  After subtracting the
first point, define
\[
 v(\vartheta)=
 \bigl(\cos3\vartheta-1,\ \sin3\vartheta,
 2(\cos2\vartheta-1),\ 2\sin2\vartheta\bigr)
\]
and write the affine determinant as
\[
 \mathcal D=\det\bigl(v_\ell(\vartheta_j)\bigr)_{1\leq j,\ell\leq4}.
\]
Half-angle expansion gives the following exact factorization.  Writing
$e_k=e_k(y_1,y_2,y_3,y_4)$ for the elementary symmetric polynomials, set
\[
 \begin{aligned}
 R:={}&5e_1^2-2e_1e_3-3e_2^2-2e_2e_4-50e_2\\
    &\quad-3e_3^2+5e_4^2-70e_4-75.
 \end{aligned}
\]
Then
\begin{equation}\label{eq:explicit-det}
 \mathcal D=-64
 \left(\prod_{j=1}^4\frac{2y_j}{(1+y_j^2)^3}\right)
 \left(\prod_{1\leq i<j\leq4}(y_i-y_j)\right)R.
\end{equation}
Using $e_1^2=\sum_{j=1}^4y_j^2+2e_2$ gives
\[
 \begin{aligned}
 R={}&-75+5\sum_{j=1}^4y_j^2+5e_4^2\\
 &-\bigl(40e_2+2e_1e_3+3e_2^2+2e_2e_4+3e_3^2+70e_4\bigr).
 \end{aligned}
\]
The parameter range gives
\[
 0<y_j\leq\tan\frac{\phi(1/6)}2
 =\frac{12\sqrt5}{31}<1.
\]
Thus $e_k>0$ for $1\leq k\leq4$, and hence
$R<-75+20+5=-50$, while every other factor in
\eqref{eq:explicit-det} is nonzero.  Thus every five points on the
closed source arc, including the pole $N$, are affinely independent.
Lemma~\ref{lem:Q-projection-transfer} now proves that every four
projected points are affinely independent and no five are cospherical.
\end{proof}

\begin{proof}[Proof of Theorem~\ref{thm:primitive-intro}]
By the anchor-point argument in
Remark~\ref{rem:similarity-types}, fixing $n-1$ points of the curve
in Theorem~\ref{thm:P3} and varying the last rational parameter yields
infinitely many pairwise non-similar $n$-point subsets.  For each such
finite set $Y\subset\Q^3$, choose an integer
$L$ clearing the distance denominators and let
\[
 g=\gcd\{L\norm{x-y}:x,y\in Y,\ x\neq y\}.
\]
Fix $y_0\in Y$.  Then
$M=(L/g)(Y-y_0)\subset\Q^3$ contains the origin, has integral distances,
and those distances have gcd one.  The rational-to-lattice embedding
theorem of Marshall and Perlis \cite[Theorem~6]{MarshallPerlis2013}
gives a rotation carrying $M$ into $\Z^3$.
Congruence and scaling preserve affine independence, the property that no
five points are cospherical, and similarity type.  Applying this
construction to one representative from each of the infinitely many
similarity types proves the theorem.\footnote{A concrete primitive
$8_3$-cluster, its exact construction, and its verification are recorded in
the computational supplement and accompanying program included as ancillary
material.}
\end{proof}

For example, choose distinct sufficiently large integers
$M_1,\ldots,M_n$ and take $u_j=1/M_j$ in the permitted interval.  Clearing
denominators and rescaling by the reciprocal of the greatest common divisor
of the pairwise distances then gives an effectively computable, though very crude, upper
bound for the minimum diameter of primitive $n$-point integral point sets in
semi-general position in $\R^3$.  We do not record the formula because no
optimization is attempted.  For the classical minimum-diameter problem for
integral point sets, see \cite{HarborthKemnitzMoller1993}.

%% file: 06-boundaries.tex
\section{Characteristic, rational realization, and limitations}
\label{sec:boundaries}

\subsection{Characteristic and rational realization}

For a nondegenerate $d$-simplex with rational edge lengths, call the class
of its edge Gram determinant in $\Q_{>0}^{\times}/\Q^{\times2}$ its
\emph{characteristic}; for an integral simplex one usually chooses the
positive squarefree integral representative.

\begin{lemma}[Common characteristic and exact rational realization]
\label{lem:rational-realization}
Let $P\subset\R^d$ be a full-dimensional rational distance set, and let
$G$ be the edge Gram matrix of any nondegenerate $d$-simplex in $P$.
The square class of $\det G$ is independent of the chosen simplex and is
the common characteristic of $P$.  Moreover, $P$ admits a congruent
realization in $\Q^d$ if and only if
\[
 G\cong_\Q I_d.
\]
Equivalently, the common characteristic is $1$ and
$\epsilon_p(G)=1$ for every finite prime $p$.
\end{lemma}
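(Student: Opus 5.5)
We first fix notation.  For a simplex with vertices $a_0,\ldots,a_d\in P$, let $G$ be the edge Gram matrix with entries
\[
 G_{ij}=\ip{a_i-a_0}{a_j-a_0}
 =\tfrac12\bigl(\norm{a_i-a_0}^2+\norm{a_j-a_0}^2-\norm{a_i-a_j}^2\bigr),
 \qquad 1\leq i,j\leq d.
\]
By the polarization identity from the introduction, every entry is rational.  The matrix is positive definite because the simplex is nondegenerate.

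\emph{Step 1: independence of the simplex.}  The plan is to attach to $P$ a single rational quadratic space and show that every such $G$ is a Gram matrix of it.  Fix one nondegenerate simplex $a_0,\ldots,a_d$.  Let $V$ be the $\Q$-span of the vectors $x-a_0$ for $x\in P$.  The anchor argument of the introduction expresses each inner product $\ip{x-a_0}{a_i-a_0}$ rationally in terms of distances.  Writing $x-a_0$ in the real basis $a_i-a_0$, the coefficients are $G^{-1}$ times a rational vector, hence rational.  So $V$ equals the $\Q$-span of $a_1-a_0,\ldots,a_d-a_0$, a $d$-dimensional $\Q$-space.  The Euclidean inner product restricts to a rational form on $V$, since all inner products of $\Q$-combinations of points of $P$ are rational.

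Now take any other nondegenerate simplex $b_0,\ldots,b_d$ in $P$.  The vectors $b_i-b_0$ lie in $V$ and form a $\Q$-basis of it.  Hence its Gram matrix $G'$ satisfies $G'=T^{\mathsf T}GT$ with $T\in\operatorname{GL}_d(\Q)$.  It follows that $G'\cong_\Q G$, so $\det G'=\det G\,(\det T)^2$.  This gives the common characteristic, and in fact shows that the full rational isometry class is independent of the simplex.

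\emph{Step 2: the realization criterion.}  Suppose first that $P$ has a congruent copy in $\Q^d$.  Then $a_i-a_0\in\Q^d$, so $G=A^{\mathsf T}A$, where $A$ is a rational matrix with columns $a_i-a_0$.  Thus $G\cong_\Q I_d$.

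Conversely, suppose $G=T^{\mathsf T}T$ with $T\in\operatorname{GL}_d(\Q)$.  Define a map on $P$ by $x\mapsto T c(x)$, where $c(x)\in\Q^d$ are the rational coordinates of $x-a_0$ in the basis $a_i-a_0$ found in Step~1.  This map preserves all inner products of difference vectors, because both sides equal $c(x)^{\mathsf T}Gc(y)$.  So it is an isometry onto a subset of $\Q^d$.  Any distance-preserving map between subsets of Euclidean space extends to a rigid motion, so it is a congruence.

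The final equivalence is exactly Lemma~\ref{lem:HM} applied to the positive-definite form $G$.  That lemma says $G\cong_\Q I_d$ if and only if $\det G$ is a square and all finite Hasse invariants are trivial.

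\emph{Main obstacle.}  The only delicate point is the rationality of the coordinates $c(x)$ in Step~1.  It follows from invertibility of the rational matrix $G$ together with the polarization identity.  Everything else is formal.
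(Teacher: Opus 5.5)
Your proof is correct and follows essentially the same route as the paper. You derive rational affine coordinates of every point in the edge basis of one simplex by applying $G^{-1}$ to polarization data, and you compare simplices via $G'=T^{\mathsf T}GT$ with $T\in\operatorname{GL}_d(\Q)$. For the converse you use the columns of $T$ as new rational edge vectors, and the local reformulation comes from Lemma~\ref{lem:HM}; you also state explicitly that the whole rational isometry class of $G$, not just $\det G$ modulo squares, is independent of the simplex, which makes the criterion well defined.
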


\begin{proof}
Fix affinely independent anchors $p_0,\ldots,p_d$, put
$v_i=p_i-p_0$, and let $G=(\ip{v_i}{v_j})$.  Polarization expresses every
entry of $G$ in terms of squared rational distances, so $G$ is rational.
The same calculation shows that every point of $P$ has rational affine
coordinates in the basis $v_1,\ldots,v_d$.  If
$M\in\operatorname{GL}_d(\Q)$ is the edge-coordinate matrix of another
nondegenerate simplex, then
\[
 \det(M^{\mathsf T}GM)=\det(M)^2\det G.
\]
Thus the square class is independent of the simplex, as in the standard
common-characteristic theorem \cite[Theorem~4]{Kurz2006}.

If $G=T^{\mathsf T}T$ with $T\in\operatorname{GL}_d(\Q)$, use the columns
of $T$ as the new edge vectors; the rational affine coordinates just found
place all remaining points in $\Q^d$.  Conversely, rational edge vectors
in a $\Q^d$-realization give $G=C^{\mathsf T}C$ for some
$C\in\operatorname{GL}_d(\Q)$.  The final equivalence follows from
Lemma~\ref{lem:HM}.
\end{proof}

The original even-dimensional sphere curve of
Proposition~\ref{prop:even-geometry} already has characteristic $1$,
even before its quadratic form is identified over $\Q$ with the standard
Euclidean form.  Indeed, choose $2r+1$ affinely independent
rational-parameter points and let $M\in\operatorname{GL}_{2r}(\Q)$ be the
edge matrix before the coordinate weights are applied.  With
\[
 \Lambda=\diag\bigl((\sqrt{\lambda_k},\sqrt{\lambda_k})_{k\in K_r}\bigr),
\]
the weighted edge matrix is $E=\Lambda M$, and hence
\[
 \det(E^{\mathsf T}E)
 =\left(\left(\prod_{k\in K_r}\lambda_k\right)\det M\right)^2
 \in\Q^{\times2}.
\]
The following finite example shows that characteristic $1$ alone does not
imply rational realizability.

\begin{proposition}[Characteristic $1$ without rational realization]
\label{prop:char-counterexample}
There is a full-dimensional nine-point integral point set
$X\subset\R^8$ of characteristic $1$ that admits no congruent realization
in $\Q^8$.
\end{proposition}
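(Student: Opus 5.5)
The plan is to take nine rational-parameter points on the even-dimensional sphere curve of Proposition~\ref{prop:even-geometry} with $r=4$, i.e.\ $d=8$ and $K_4=\{1,2,3,4\}$. Nine distinct points on the short arc are affinely independent by Lemma~\ref{lem:short-arc} and \eqref{eq:wronskian}, and their distances are rational by Lemma~\ref{lem:chebyshev-chord}(ii). Multiplying by a common integer $L$ makes all distances integral. This changes the edge Gram matrix only by the square factor $L^2$, so it alters neither the characteristic nor the rational isometry class. The computation preceding the proposition already shows that the characteristic is $1$. By Lemma~\ref{lem:rational-realization}, it therefore remains to arrange $\epsilon_p(G)=-1$ for some finite prime $p$.

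To identify $G$, let $M\in\operatorname{GL}_8(\Q)$ be the unweighted edge matrix, whose entries are real and imaginary parts of rational points $z(t)^{2(k+1)}$, hence rational. The weighted edge matrix is $\Lambda M$, so $G=M^{\mathsf T}\Lambda^2M$. Thus, over $\Q$,
\[
 G\cong\mathop{\perp}_{k\in K_4}\langle\lambda_k,\lambda_k\rangle ,
\]
which is the form $Q_D$ of \eqref{eq:QD} with $D=1$. Lemma~\ref{lem:paired-invariants} with $D=1$ gives $\epsilon_v(G)=(A,-1)_v$, where $A=\prod_k\lambda_k$. By \eqref{eq:norm-Hilbert}, $G\not\cong_\Q I_8$ exactly when $A$ is not a sum of two rational squares. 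Since $A>0$, the real symbol is trivial, so it suffices to find one finite prime with $(A,-1)_p=-1$; Hilbert reciprocity then supplies a second such prime automatically.

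The weights come from the $m=2$ data in Claim~1 of Proposition~\ref{prop:even-rational}. Take
\[
 b_0=2,\qquad b_2=\delta,\qquad b_1=\frac{2}{2+\delta},
\]
so that
\[
 \lambda_1=4(b_1-\delta),\quad \lambda_2=b_1^2+4\delta,\quad \lambda_3=2b_1\delta,\quad \lambda_4=\delta^2 .
\]
Modulo squares this gives
\[
 A\equiv 2\,b_1\delta\,(b_1-\delta)(b_1^2+4\delta)\pmod{\Q^{\times2}} .
\]

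The main obstacle is choosing an explicit small positive rational $\delta$, with all $\lambda_k>0$, for which this class is not a Gaussian norm. The criterion is that some prime $p\equiv3\pmod4$ divides the squarefree part of $A$ to an odd power. I would first try $\delta=1/L$ for small integers $L$. Writing $A$'s square class as an integer polynomial expression in $L$, I would factor it and look for such a prime. Failing that, I would let $L$ range over an arithmetic progression forcing $v_3(A)$ to be odd. For example, choose $L$ so that $3$ divides exactly one factor, such as $b_1^2+4\delta$, to first order. Finally, I would record the nine parameters $t_j\in\Q\cap(0,\eta_4)$ and the scaling $L$. One can also verify the example directly by computing $\det G$ and one Hilbert symbol of a diagonalization of $G$; this is a finite exact check independent of the general argument.
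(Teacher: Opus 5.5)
Your proposal is correct and is essentially the paper's argument. The identity \eqref{eq:char-counterexample-identity} is exactly your $m=2$ family at $\delta=1/2$, multiplied through by $10^2$. The paper then concludes just as you plan: Lemma~\ref{lem:paired-invariants} with $D=1$ gives $\epsilon_v(q)=(A,-1)_v$, and $A\equiv11$ modulo squares, so $\epsilon_{11}(q)=-1$.

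The search you leave open ends at the first admissible value. For $\delta=1/L$ one gets $A=64(2L^2-2L-1)(L+1)(L^2+3L+1)/\bigl(L^4(2L+1)^4\bigr)$. The value $L=1$ violates $\lambda_1>0$, and $L=2$ gives the square class of $99$, i.e.\ of $11$.

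Your $3$-adic fallback is misdescribed as stated. The numerator factors $2L^2-2L-1$ and $L+1$ are divisible by $3$ simultaneously, exactly when $L\equiv2\pmod3$, so $3$ never enters through just one factor. Nevertheless $L=8$ gives $v_3(A)=3$, so the fallback can still be made to work.

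The one real difference is how affine independence is certified. The paper uses the explicit parameters $t=0,\dots,8$, which need not lie on a short arc, and checks the $9\times9$ augmented determinant modulo $23$. This produces a fully explicit, checkable example with an explicit scaling factor. Your appeal to Lemma~\ref{lem:short-arc} avoids that computation and suffices for the existence statement. However, since the short-arc constant is never computed, it does not by itself yield concrete parameters.
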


\begin{proof}
The Chebyshev identity
\begin{equation}\label{eq:char-counterexample-identity}
 (20U_0+8U_2+5U_4)^2
 =120U_1^2+264U_2^2+80U_3^2+25U_4^2
\end{equation}
gives the positive-definite rational quadratic form with paired diagonal
coefficients
\[
 q=\langle120,120,264,264,80,80,25,25\rangle.
\]
Put
\[
 A=120\cdot264\cdot80\cdot25
   =63\,360\,000=11\cdot2400^2.
\]
Thus $\det q=A^2$ is a square.  Lemma~\ref{lem:paired-invariants},
with $D=1$, gives $\epsilon_v(q)=(A,-1)_v$.  At $v=11$,
\[
 \epsilon_{11}(q)=(11,-1)_{11}
 =\left(\frac{-1}{11}\right)=-1,
\]
whereas $\epsilon_{11}(I_8)=1$.  Hence
$q\not\cong_\Q I_8$.

For $t\in\Q$, set $z(t)=(1+it)/(1-it)$ and
\[
 v(t)=\bigl(
 \Re z(t)^4,\Im z(t)^4,
 \Re z(t)^6,\Im z(t)^6,
 \Re z(t)^8,\Im z(t)^8,
 \Re z(t)^{10},\Im z(t)^{10}\bigr)\in\Q^8.
\]
Define
\[
 \mathcal V:=
 \begin{pmatrix}
  1&v(0)^{\mathsf T}\\
  \vdots&\vdots\\
  1&v(8)^{\mathsf T}
 \end{pmatrix}
 \in M_9(\Q).
\]
Let
\[
 \mathcal D=\diag(\sqrt{120},\sqrt{120},
 \sqrt{264},\sqrt{264},\sqrt{80},\sqrt{80},5,5)
\]
define $P(t)=\mathcal Dv(t)$, and take the nine points
$P_j=P(j)$, $0\leq j\leq8$.  If
$z(t)\overline{z(t')}=c+is$, then
\eqref{eq:char-counterexample-identity} gives
\begin{equation}\label{eq:char-counterexample-distance}
 \norm{P(t)-P(t')}
 =2|s|\,(80c^4-28c^2+17)\in\Q.
\end{equation}
The augmented coordinate matrix $\mathcal V$ is nonsingular.  Since
$1+j^2\not\equiv0\pmod{23}$ for $0\leq j\leq8$, entrywise
reduction defines $\overline{\mathcal V}\in M_9(\mathbb F_{23})$, and an
exact calculation gives
\begin{equation}\label{eq:char-counterexample-det}
 \det\overline{\mathcal V}=10\in\mathbb F_{23}.
\end{equation}
Hence the nine $P_j$ are affinely independent.

The common denominator of their 36 rational distances divides
\[
 L=5^{14}13^{10}17^5 37^5.
\]
Consequently $X=\{LP_0,\ldots,LP_8\}$ is an integral point set.  If
$M=[v(1)-v(0)\mid\cdots\mid v(8)-v(0)]$, then
$M\in\operatorname{GL}_8(\Q)$ and an
edge Gram matrix of $X$ is
\[
 G=L^2M^{\mathsf T}qM.
\]
Its determinant is a rational square, so $X$ has characteristic $1$.  If
$X$ had a congruent realization in $\Q^8$, Lemma
\ref{lem:rational-realization} would give $G\cong_\Q I_8$; since
$LM$ is rational and invertible, this would imply
$q\cong_\Q I_8$, a contradiction.\footnote{The exact data and calculations
verifying the polynomial identity, affine determinant, and rationality of all
$36$ distances are recorded in the computational supplement included as
ancillary material.}
\end{proof}

\begin{remark}
Kurz's theorem \cite[Theorem~4]{Kurz2006} establishes the
common-characteristic property.  Fricke
\cite[Proposition~3.3]{Fricke2001} proves rational-coordinate realizability
for Heron simplices; by definition, the contents of all of their faces of
every dimension are integral.  Kohnert and Kurz
\cite[p.~2107]{KohnertKurz2009}, citing earlier sources, state that
characteristic $1$ yields a rational-coordinate realization.  For an
arbitrary full-dimensional integral point set in higher dimension, the
determinant square class alone does not determine the rational isometry
class of its edge Gram form: its Hasse invariants at all finite places must
also agree with those of the standard Euclidean form.  Thus, at this level
of generality, the statement requires the corresponding local hypotheses.
Proposition~\ref{prop:char-counterexample} gives an explicit example
illustrating the need for this additional local condition, while
Lemma~\ref{lem:rational-realization} gives the exact criterion used
here.  This does not affect the common-characteristic theorem or Fricke's
result.  We do not claim that dimension eight is minimal for this
phenomenon.
\end{remark}

\subsection{Limits of the construction}

\paragraph{General position for even $d$.}
In even dimension $d$, the present harmonic curve lies on a single sphere.
Hence every $d+2$ of its points are cospherical, and this method yields
affine general position but not general position in the sense of
Definition~\ref{def:positions}.  In odd dimension we instead project from
the distinguished pole $N$, which is an endpoint of the source curve arc.
Under this
stereographic projection a sphere in the target hyperplane pulls back to a
hyperplane section of the source sphere, so the short-arc affine
nondegeneracy excludes $d+2$ cospherical image points.  A different
even-dimensional general-position construction is not obtained by the
present method.

\paragraph{\texorpdfstring{$d\equiv3\pmod4$}{d congruent to 3 mod 4}.}
For $d\equiv1\pmod4$, our effective choice of weights and norm parameter
makes the restricted form $Q_D|_{N^{\perp_{Q_D}}}$ have square determinant
and trivial Hasse
invariants at every finite place.  For $d=2r-1\equiv3\pmod4$, put
\[
 \rho=\sum_k\lambda_k,
 \qquad A=\prod_k\lambda_k.
\]
Choosing $D=\rho$ removes the determinant obstruction, but the residual
conditions are
\[
 (A,-\rho)_v(\rho,-1)_v^{\binom r2}=1
 \qquad\text{for every place }v.
\]
They form a simultaneous Hilbert-symbol selection problem.  The positivity
recursion for the Chebyshev weights does not by itself supply a rational
parameter satisfying all these local conditions: Hilbert symbols depend on
square classes and are not continuous under a small positive real
perturbation.  Consequently, verifying these conditions in finitely many
individual dimensions does not yield a uniform theorem for the entire
congruence class.

\paragraph{Primitive lattice normalization.}
For a finite rational-coordinate configuration, clearing coordinate and
distance denominators already gives a lattice realization after scaling.
The additional point in dimension three is primitive normalization: after
rescaling the configuration by the reciprocal of the greatest common
divisor of its distances, the resulting coordinates are a priori only
rational, and Marshall and Perlis
\cite[Theorem~6]{MarshallPerlis2013} restore a congruent realization in
$\Z^3$.
Their paper also treats the planar analogue, but it does not provide a
rational-to-lattice embedding theorem valid in all dimensions.  No uniform
substitute in higher dimensions is proved here.